\newcommand{\Mb}[1]{\left[{#1}\right]}
\newcommand{\Mcb}[1]{\left\{{#1}\right\}}
\newcommand{\vvx}{\vec{\bm{x}}}
\newcommand{\vvn}{\vec{\bm{n}}}
\newcommand{\vvy}{\vec{\bm{y}}}
\newcommand{\vvz}{\vec{\bm{z}}}
\newcommand{\vvxi}{\vec{\bm{\xi}}}
\newcommand{\vvzero}{\vec{\bm{0}}}
\newcommand{\vx}{\bm{x}}
\newcommand{\vz}{\bm{z}}
\newcommand{\vxi}{\bm{\xi}}
\renewcommand{\Im}{\mathrm{Im}}
\newcommand{\real}{\mathbb{R}}
\renewcommand{\hat}{\widehat}
\DeclareMathOperator{\supp}{supp}
\DeclareMathOperator{\linspan}{span}
\newcommand{\cB}{\mathcal{B}}
\newcommand{\cF}{\mathcal{F}}
\crefname{subsection}{section}{sections}
\Crefname{subsection}{Section}{Sections}
\title{Approximation by Herglotz wave functions%
 \thanks{\funding{Army Research Office Contract No.
 W911NF-16-1-0457.}}}
\author{Fernando Guevara Vasquez%
\thanks{Mathematics Department, University of Utah, 155 S 1400 E RM
233, Salt Lake City UT 84112-0090 (\email{fguevara@math.utah.edu},
\email{mauck@math.utah.edu}).}
\and
  China Mauck%
  \footnotemark[2]
}
\begin{document}
\maketitle

 \begin{abstract}
 We consider the problem of approximating a function using Herglotz wave
 functions, which are a superposition of plane waves. When the
 discrepancy is measured in a ball, we show that the problem can
 essentially be solved by considering the function we wish to
 approximate as a source distribution and time reversing the resulting
 field. Unfortunately this gives generally poor approximations.
 Intuitively, this is because Herglotz wave functions are determined by
 a two-dimensional field and the function to approximate is
 three-dimensional. If the discrepancy is measured on a plane, we
 show that the best approximation corresponds to a low-pass filter,
 where only the spatial frequencies with length less than the wavenumber
 are kept. The corresponding Herglotz wave density can be found
 explicitly.  Our results have application to designing standing
 acoustic waves for self-assembly of micro-particles in a fluid.
 \end{abstract}

\begin{keywords}
Herglotz wave function, wave control, acoustic radiation
force, time reversal
\end{keywords}

\begin{AMS}
 35J05,
 74J05,
 41A29
\end{AMS}

\section{Introduction}
\label{sec:intro}

We study the problem of finding the best approximation of a function by
{\em Herglotz wave functions}, which are functions of the form
\begin{equation}
 \label{eq:hwf}
 u(\vvx) = \int_{S(0,k)} g(\vvx) \exp[i \vvx \cdot \vvz] dS(\vvz),
\end{equation}
where the integral is over the sphere $S(0,k) \equiv \{ \vvx \in \real^3
~|~ |\vvx| = k\}$, $k = 2\pi/\lambda$ is the wavenumber corresponding to
a wavelength $\lambda$ and we refer to  $g(\vvx)$ as a {\em Herglotz wave
density}\footnote{Herglotz wave functions may also be defined as
integrals over $S(0,1)$, see e.g. \cite[\S 3]{Colton:1998:IAE}.}.
Herglotz wave functions are entire solutions to the Helmholtz equation
$\Delta u + k^2 u = 0$. 

The application we have in mind for this
approximation problem is ultrasound directed self-assembly of
micro-particles
\cite{Greenhall:2013:CUM,Greenhall:2015:UDS,Prisbrey:2017:UDS}, where
micro-particles in a fluid are controlled with standing acoustic waves.
For this application, the fluid pressure $u$ inside a reservoir $D$ (an
open subset of $\real^3$) with Lipschitz boundary $\partial D$ satisfies
 \begin{equation}
  \label{eq:reservoir}
  \begin{aligned}
   \Delta u + k^2 u &= 0~\text{in}~D,~\text{and}\\
   \vvn \cdot \nabla u + i k \alpha u &= \phi~\text{on}~\partial D,
  \end{aligned}
 \end{equation}
where $\vvn(\vvx)$ is the unit outward pointing normal vector to the
boundary and $\alpha(\vvx)$ is a function representing the boundary
impedance of $\partial D$. The reservoir boundary $\partial D$ is assumed to be lined with
transducers and the external excitation $\phi(\vvx)$  models the transducer
operating parameters, i.e. the amplitude and phase of the voltage driving the
transducers. The problem is to find transducer
operating parameters $\phi$ such that particles cluster in a desired pattern,
e.g. a surface or a curve. If the particles are neutrally
buoyant in the fluid and less compressible than the fluid, the particles are known to
cluster about the nodal or zero level set of the pressure, i.e. $\{ \vvx
\in D ~|~ u(\vvx) = 0 \}$ (see e.g.  \cite{Settnes:2012:FAS}). The
strategy we propose is to construct a function
$f$ defined on $D$, with nodal set containing the desired pattern.
If $v$ is a Herglotz wave function that is close enough to $f$, we
expect their nodal sets to be close as well. Thus taking $\phi = \vvn
\cdot \nabla v + i k \alpha v$ gives a function $v$ solving
\eqref{eq:reservoir} whose nodal set is close to the desired pattern.

\subsection{Related work}
The forces that a standing acoustic field in a fluid exerts on
compressible particles can be described by an acoustic radiation
potential \cite{King:1934:ARP, Yosioka:1955:ARP, Gorkov:1962:FAS,
Settnes:2012:FAS}, whose minima correspond to locations where the
particles tend to cluster. The results we present here apply only to the
case where the minima coincide with the zero level set of the wave
field, e.g. when the particles are less compressible than the fluid and are
neutrally buoyant in the fluid. The problem of designing Helmholtz equation
solutions for which the minima of the acoustic radiation potential are
close to a desired pattern has been studied numerically and
experimentally in both 2D and 3D settings
\cite{Greenhall:2015:UDS,Prisbrey:2017:UDS}. The numerical approach in
\cite{Greenhall:2015:UDS,Prisbrey:2017:UDS} allows for a broader range
of particle and fluid parameters than the one we consider here and also
accounts for having finitely many transducers lining the reservoir.  The
design problem is formulated as a constrained minimization problem in
\cite{Greenhall:2015:UDS}. The objective function is a quadratic
functional representing an aggregate of the acoustic radiation potential
at the points where particles are desired.  A quadratic constraint
restricts the minimization to transducer excitations with the same total
power. The minimization can be solved efficiently as it is equivalent to
finding the eigenvector corresponding to the smallest (algebraically
speaking) eigenvalue of a matrix \cite{Greenhall:2015:UDS}.

We emphasize that we are interested in approximating functions that are
not necessarily Helmholtz equation solutions with Herglotz wave
functions. Thus, the problem we consider here is fundamentally different
from the results showing that Herglotz wave functions are dense in the
space of solutions to the Helmholtz equation \cite{Weck:2004:AHW}.
Another related problem is that of approximating an entire solution to the
Helmholtz equation in a region by a linear combination of singular
Helmholtz equation solutions. This can be done using Green's identities
(see e.g.  \cite{Colton:1998:IAE}) and has applications to active
cloaking as proposed in
\cite{Miller:2007:PC}. Indeed, such an
approximation scheme could be used to significantly reduce an incident
field within a region, which in turn suppresses scattering from any
object that we wish to hide within the region. Other approaches to solve
the same problem, while not completely surrounding the object with
sources, include \cite{Guevara:2011:ECA,Onofrei:2014:AMF}.

\subsection{Contents}
We first consider the approximation problem on a ball of radius $R$ in
\cref{sec:herglotz:vol}. The solution to this problem is related to
a time reversal experiment where the source density is the function we
wish to approximate.  When the approximation problem is restricted to a plane
(\cref{sec:herglotz:plane}), the best approximation is not related to
time reversal but is a low pass filtered version of the function we wish
to approximate. We illustrate both approaches with numerical experiments in
\cref{sec:numerics} and conclude with a summary and future work in
\cref{sec:future}.

\section{Approximation by Herglotz wave functions restricted to a ball}
\label{sec:herglotz:vol}
In \cref{sec:heuristic} we give a heuristic that motivates the
best approximation result we are after. Some facts about time reversal
are recalled in \cref{sec:timerev} and its connection with the heuristic
is in \cref{sec:conv}. For the best approximation result, we
work on a space of Herglotz wave functions restricted to the ball
$B(0,R)$, which is defined and studied in \cref{sec:restricted}. The
projection of a function into this space is carried out in
\cref{sec:proj}. Finally we explain in \cref{sec:relation} how the
projection and the heuristic are related.
\subsection{A motivating heuristic}
\label{sec:heuristic}
Let $u$ be a solution to the Helmholtz equation with wavenumber $k$.
If the Fourier transform $\hat u(\vvxi)$ of $u$ is well-defined (perhaps
in the sense of distributions), it must satisfy
\begin{equation}
 \label{eq:symbol}
 (-|\vvxi|^2 + k^2) \hat u (\vvxi) = 0.
\end{equation}
In particular we must have $\supp \hat u \subset S(0,k)$. Thus if we are
given a function $f$ to approximate with a Helmholtz equation solution,
it would make sense to use as approximation a function $g$ whose Fourier
transform $\hat g(\vvxi)$ coincides with $\hat f(\vvxi)$ on the sphere
$|\vvxi| = k$, but is zero elsewhere. This heuristic of ``filtering out
everything outside of $S(0,k)$ in spatial frequency'' sets the
expectations for the approximation result we are after. First we need to
use appropriate spaces to be able to compare a function defined on some
subset of $\real^3$ with functions defined on a sphere. Second, the
approximation can be very poor, since a function with zero Fourier
transform on the sphere $|\vvxi| = k$ but non-zero elsewhere would be
approximated by the zero solution to the Helmholtz equation.

\subsection{Time reversal and convolution with a spherical Bessel
function}
\label{sec:timerev}
{\em Time reversal} consists in recording an acoustic field at a surface
(the time reversal mirror), time reversing it and propagating it back in
the medium \cite{Fink:1997:TRA}. The field at location $\vvx$ generated
by a point source in a homogeneous medium at location
$\vvy$ is $G(\vvx-\vvy,k)$, where $G$ is the free space Green function
for the Helmholtz equation
\begin{equation}
 G(\vvx,k) = \frac{\exp[i k |\vvx|]}{4\pi |\vvx|}.
 \label{eq:green}
\end{equation}
The time reversal procedure applied to the field $G(\vvx-\vvy,k)$, with
time reversal mirror being the sphere $S(0,R)$, gives
the field
\begin{equation}
 u_{\vvy} (\vvx) = \int_{S(0,R)} G(\vvx-\vvz,k) \overline{G(\vvz-\vvy,k)}
 dS(\vvz)  = \frac{1}{k} \Im~G(\vvx-\vvy, k),
\end{equation}
assuming the source is inside the sphere, i.e. $|\vvy| < R$.
This follows from the Helmholtz-Kirchhoff identity
\cite[\S 2.1]{Garnier:2016:PIA}. Thus the imaginary part of the Green
function gives the tightest possible spot that can be focused using
waves in a homogeneous medium at wavenumber $k$. Using \eqref{eq:green}
and the zero-th order spherical Bessel function $j_0(t) = \sin(t) / t$
we get that 
\[
 u_{\vvy} (\vvx) = (4\pi)^{-1} j_0(k|\vvx-\vvy|).
\]
The same principle can be applied to more complicated source
densities. If $f$ is an $L^\infty$ real valued source density function with
$\supp f \subset B(0,R)$, the resulting field is $G(\cdot,k) * f$ and time
reversing it with a time reversal mirror on $S(0,R)$ gives
$(4\pi)^{-1}j_0(k|\cdot|) * f$. Hence time reversal and convolution with
$(4\pi)^{-1}j_0(k|\cdot|)$ are equivalent in this setting.

\subsection{Convolution with a spherical Bessel function and the heuristic}
\label{sec:conv}
As we see in the next lemma, convolution with $j_0(k|\cdot|)$ (and thus
time reversal) is equivalent (up to a multiplicative constant) to the heuristic of ``filtering
everything outside of $S(0,k)$ in spatial frequency''.
\begin{lemma}
Let $f$ be a compactly supported $L^\infty$ function, then 
 \begin{equation}
  [f*j_0(k|\cdot|)](\vvx) = \frac{1}{4\pi k^2} \int_{S(0,k)} \hat
  f(\vvz) \exp[i \vvx \cdot \vvz] dS(\vvz).
 \end{equation}
In other words, $f*j_0(k|\cdot|)$ is a Herglotz wave function with wavenumber
 $k$ and density $\hat f(\vvz) |_{S(0,k)} / (4\pi k^2)$.
\end{lemma}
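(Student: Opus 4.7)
The plan is to reduce the claim to the classical Fourier representation of $j_0$ on the sphere and then interchange the order of integration. Concretely, the cornerstone identity is
\[
 j_0(k|\vvx|) = \frac{1}{4\pi k^2} \int_{S(0,k)} \exp[i\vvx\cdot\vvz]\, dS(\vvz),
\]
which I would establish by a direct computation: parametrize $S(0,k)$ via $\vvz = k\vvn$ with $\vvn \in S^2$, align spherical coordinates so that $\vvn\cdot\vvx = |\vvx|\cos\theta$, and evaluate $\int_0^\pi \exp[i k|\vvx|\cos\theta]\sin\theta\, d\theta = 2\sin(k|\vvx|)/(k|\vvx|) = 2 j_0(k|\vvx|)$. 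The azimuthal integration contributes $2\pi$, and the Jacobian $k^2$ from the change of variables produces the factor $4\pi k^2$ on the right.

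Next, substitute this identity into the convolution:
\[
 [f*j_0(k|\cdot|)](\vvx) = \int_{\real^3} f(\vvy)\, j_0(k|\vvx-\vvy|)\, d\vvy
 = \frac{1}{4\pi k^2}\int_{\real^3} f(\vvy) \int_{S(0,k)} \exp[i(\vvx-\vvy)\cdot\vvz]\, dS(\vvz)\, d\vvy.
\]
Swap the two integrals; the swap is legitimate by Fubini since $f$ is compactly supported and bounded, so the integrand is absolutely integrable over the product of a compact set in $\real^3$ and the compact sphere $S(0,k)$. After pulling $\exp[i\vvx\cdot\vvz]$ out of the $\vvy$-integral, the inner integral is exactly $\hat f(\vvz) = \int f(\vvy)\exp[-i\vvy\cdot\vvz]\, d\vvy$, so one arrives at the claimed formula
\[
 [f*j_0(k|\cdot|)](\vvx) = \frac{1}{4\pi k^2}\int_{S(0,k)} \hat f(\vvz) \exp[i\vvx\cdot\vvz]\, dS(\vvz).
\]

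There is no real obstacle here; the proof is essentially a Fubini argument together with the well-known fact that the Fourier transform of surface measure on $S(0,k)$ is (a constant multiple of) $j_0(k|\cdot|)$. The only point worth stating carefully is that $\hat f$, which in general would need to be interpreted distributionally, is here an honest smooth (in fact entire) function because $f$ is compactly supported, so restricting it to $S(0,k)$ poses no difficulty, and the rewriting shows directly that the result is a Herglotz wave function with density $\hat f(\vvz)|_{S(0,k)}/(4\pi k^2)$.
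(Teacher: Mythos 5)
Your proof is correct and follows essentially the same route as the paper: express $j_0(k|\cdot|)$ as a Herglotz wave function with constant density $1/(4\pi k^2)$, substitute into the convolution, and swap integrals by Fubini so that the inner $\vvy$-integral becomes $\hat f(\vvz)$. The only cosmetic difference is that you derive the spherical identity by direct computation in spherical coordinates, whereas the paper simply cites the Funk--Hecke formula (your computation is just its $n=0$ case), and both arguments note that $\hat f$ is smooth by compact support so its restriction to $S(0,k)$ makes sense.
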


\begin{proof}
 By the Funk-Hecke formula (see e.g. \cite[\S 2.4]{Colton:1998:IAE}), we have
 that
 \begin{equation}
  j_0(k|\vvx|) = \frac{1}{4\pi k^2} \int_{S(0,k)} \exp[i \vvx \cdot
  \vvz] dS(\vvz).
 \end{equation}
 Using the previous expression for $j_0(k|\cdot|)$ in the convolution
 gives the desired result:
 \[
 \begin{aligned}
  (f*j_0(k|\cdot|)) (\vvx) &= \frac{1}{4\pi k^2} \int d\vvy \, f(\vvy)
  \int_{S(0,k)}
  dS(\vvz) \exp[i (\vvx-\vvy) \cdot \vvz ] \\
 &=\frac{1}{4\pi k^2} \int_{S(0,k)} dS(\vvz) \, \exp[ i \vvx \cdot \vvz ]
 \int d\vvy \, \exp[-i \vvy \cdot \vvz] f(\vvy)\\
 &= \frac{1}{4\pi k^2} \int_{S(0,k)} dS(\vvz) \, \exp[i \vvx \cdot \vvz ]
 \hat f (\vvz).
 \end{aligned}
 \]
 We point out that $\hat f$ is $C^\infty$ by the Paley-Wiener theorem
 since $f$ is compactly supported.
\end{proof}

\subsection{Herglotz wave functions restricted to a ball}
\label{sec:restricted}
Let us consider the family of spherical wave
functions
\begin{equation}
 u_{nm}(\vvx) =  j_n(k|\vvx|) Y_{nm}(\vvx/|\vvx|), n = 0,1,\ldots, m
 =-n,\ldots n,
\end{equation}
where the $Y_{nm}$ are spherical harmonics defined as in
\cite{Colton:1998:IAE} and normalized to be orthonormal in
$L^2(S(0,1))$.
The $u_{nm}$ are solutions to the Helmholtz equation and by the
Funk-Hecke formula  they are also Herglotz
wave functions \cite[\S 2.4]{Colton:1998:IAE}. Indeed we have for  $n = 0,1,\ldots, m
 =-n,\ldots n$ that
\begin{equation}
 u_{nm}(\vvx) = \frac{i^n}{4\pi k^2} \int_{S(0,k)} \exp[ i \vvx \cdot
 \vvz] Y_{nm}(\vvz/k)dS(\vvz).
 \label{eq:fh}
\end{equation}
We consider the $(N+1)^2$ dimensional space of Herglotz wave functions
\begin{equation}
 S_{R,N} \equiv \linspan \Mcb{ u_{nm}|_{B(0,R)},~n = 0,\ldots,N,~m
 =-n,\ldots n }.
\end{equation}
The orthogonality of the spherical harmonics $Y_{nm}$ with respect to the $L^2(S(0,1))$
inner product guarantees that the spherical wave functions $u_{nm}$ (with
order up to $N$) form an orthogonal basis of $S_{R,N}$ with respect to
the $L^2(B(0,R))$ inner product. The next lemma shows that we can, for
all practical purposes, think of the functions $(k\sqrt{2}/R)u_{nm}$ as
an {\em orthonormal} basis for $S_{R,N}$ provided $R$ is sufficiently
large.
\begin{lemma}
 \label{lem:unm}
 As $R \to \infty$ and for $N$ fixed, the spherical wave functions for $n=0,\ldots,N$ and
 $m=-n,\ldots,n$ satisfy
 \[
  \frac{2 k^2}{R} \| u_{nm} \|_{L^2(B(0,R))}^2 \to 1.
\]
\end{lemma}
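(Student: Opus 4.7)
The plan is to reduce everything to a one-dimensional integral in the radius and then use the well-known asymptotics of the spherical Bessel functions at infinity.

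First, passing to spherical coordinates and using that the $Y_{nm}$ are orthonormal in $L^2(S(0,1))$, the angular integral is just $1$, so
\[
 \|u_{nm}\|_{L^2(B(0,R))}^2 = \int_0^R j_n(kr)^2 r^2 \, dr = \frac{1}{k^3} \int_0^{kR} j_n(t)^2 t^2 \, dt,
\]
after the change of variables $t = kr$. Thus the claim reduces to showing that
\[
 \frac{2}{kR} \int_0^{kR} j_n(t)^2 t^2 \, dt \longrightarrow 1 \quad\text{as}\quad R \to \infty.
\]

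Next I would use the standard large-argument asymptotic expansion of the spherical Bessel function (see e.g.\ Colton--Kress), namely
\[
 j_n(t) = \frac{\sin(t - n\pi/2)}{t} + O(t^{-2}) \quad\text{as}\quad t \to \infty,
\]
which gives $j_n(t)^2 t^2 = \sin^2(t - n\pi/2) + O(t^{-1})$ for $t \geq 1$. Integrating this identity from $1$ to $kR$ and using the elementary fact $\int_1^{T} \sin^2(t - n\pi/2)\, dt = T/2 + O(1)$, we obtain
\[
 \int_1^{kR} j_n(t)^2 t^2 \, dt = \frac{kR}{2} + O(\log R),
\]
while the contribution from $\int_0^1 j_n(t)^2 t^2\, dt$ is a finite constant depending only on $n$. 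Multiplying by $2/(kR)$ and letting $R \to \infty$ yields the limit $1$, as desired.

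The main obstacle is controlling the error term from the asymptotic expansion: it must be shown that its contribution to the integral grows strictly slower than $R$, so that it vanishes after normalization by $1/R$. The $O(t^{-1})$ bound on the squared term is what makes this work, since it only produces a logarithmic contribution which is negligible compared to the leading $kR/2$. No uniformity in $n$ is needed because $N$ (and hence $n$) is held fixed, and the implied constant in the asymptotic depends only on $n$.
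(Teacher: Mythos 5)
Your proof is correct and follows essentially the same route as the paper: reduce to the radial integral, change variables to $t=kr$, and use the asymptotic $\frac{1}{T}\int_0^T t^2 j_n(t)^2\,dt \to \frac12$. The only difference is that the paper simply cites this asymptotic from Colton--Kress, whereas you derive it from the large-argument expansion of $j_n$, which is a valid (and slightly more self-contained) way to fill in that step.
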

\begin{proof}
The norm of a spherical wave function $u_{nm}$ restricted to the ball $B(0,R)$
is
\[
\| u_{nm} \|_{L^2(B(0,R))}^2 =  \int_0^R r^2 [j_n(kr)]^2 \, dr
=  \frac{1}{k^3} \int_0^{kR} t^2  [j_n(t)]^2  \,dt.
\]
Using the asymptotic (see e.g. \cite[\S 3.3]{Colton:1998:IAE})
\[
    \lim_{T \to \infty} \frac{1}{T} \int_0^T r^2 [j_n(r)]^2 dr =
    \frac{1}{2},
\]
we see that $ R^{-1}\| u_{nm} \|_{L^2(B(0,R))}^2 \to (2k^2)^{-1}$ as $R \to
\infty$ and $N$ is kept fixed. 
\end{proof}

\subsection{Projection onto a space of restricted Herglotz wave functions}
\label{sec:proj}
Let $f$ be an $L^\infty$ function with compact support $D$ and assume
$R$ is large enough so that $D \subset B(0,R)$. Then clearly $f \in
L^2(B(0,R))$ and the best approximation $f_{R,N}$ of $f$ by functions in
$S_{R,N}$ is given by its orthogonal projection:
\begin{equation}
   f_{R,N} = \sum_{n=0}^N \sum_{m=-n}^n \frac{\langle u_{nm},
   f\rangle}{\langle u_{nm}, u_{nm} \rangle} u_{nm},
   \label{eq:proj}
\end{equation}
where $\langle \cdot , \cdot \rangle$ is the inner product on
$L^2(B(0,R))$. The inner products $\langle u_{nm}, f\rangle$ measure
spherical harmonic coefficients of $\hat f$ on the sphere $S(0,k)$, as
we show next.
\begin{lemma}
\label{lem:innerprods}
Let $f$ be an $L^\infty$ function with $\supp f \subset B(0,R)$. Then we have
\[
 \langle u_{nm}, f \rangle_{L^2(B(0,R))} = (4\pi i^n k^2)^{-1} \langle Y_{nm}(\cdot/k) , \hat f
\rangle_{L^2(S(0,k))}.
\]
\end{lemma}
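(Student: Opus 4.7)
The plan is to substitute the Funk--Hecke representation \eqref{eq:fh} for $u_{nm}$ directly into the $L^2(B(0,R))$ inner product, swap the order of integration by Fubini, and recognize the resulting inner integral as the Fourier transform $\hat f$. Because $\supp f \subset B(0,R)$, the inner product on $B(0,R)$ may equivalently be taken over all of $\real^3$ (with $f$ extended by zero), which is what makes the Fourier transform appear cleanly.

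The first step is to take the complex conjugate of \eqref{eq:fh}; this produces an integral over $S(0,k)$ whose integrand is $\exp[-i\vvx \cdot \vvz]\,\overline{Y_{nm}(\vvz/k)}$ with prefactor $(-i)^n/(4\pi k^2)$. Substituting this expression for $\overline{u_{nm}(\vvx)}$ inside $\int_{\real^3} \overline{u_{nm}(\vvx)}\,f(\vvx)\,d\vvx$ yields a double integral. Fubini applies since the sphere has finite surface measure, the exponential kernel has modulus $1$, and $f$ is bounded with compact support, so the integrand is absolutely integrable on the product space.

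After exchanging the integrals, the inner (now spatial) integral $\int_{\real^3} \exp[-i\vvx \cdot \vvz]\,f(\vvx)\,d\vvx$ equals $\hat f(\vvz)$ in the same Fourier-transform convention already used in the proof of the previous lemma. Pulling the constants out of the remaining integral over $S(0,k)$ and using $(-i)^n = i^{-n}$, the prefactor becomes $(4\pi i^n k^2)^{-1}$, and the surviving spherical integral of $\overline{Y_{nm}(\vvz/k)}\,\hat f(\vvz)$ is precisely $\langle Y_{nm}(\cdot/k),\hat f\rangle_{L^2(S(0,k))}$ under the convention that the inner product is conjugate-linear in the first argument.

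I do not anticipate any genuine obstacle: the lemma is essentially a direct Fubini computation expressing the $L^2(B(0,R))$ inner product of $u_{nm}$ with $f$ as a spherical-harmonic coefficient of $\hat f$ restricted to $S(0,k)$. The only real care needed is bookkeeping the factor $i^n$ produced by the Funk--Hecke formula and its complex conjugate, and making sure the convention of the sphere inner product on the right-hand side matches the one in force on the left.
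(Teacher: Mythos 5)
Your proposal is correct and follows essentially the same route as the paper: conjugate the Funk--Hecke representation \eqref{eq:fh}, insert it into the inner product, swap the order of integration (justified since $f$ is bounded with compact support and the sphere has finite measure), and identify the inner spatial integral as $\hat f(\vvz)$, with the $(-i)^n = i^{-n}$ bookkeeping giving the stated prefactor. No gaps.
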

\begin{proof}
Writing the $L^2(B(0,R))$ inner product and using the Funk-Hecke formula
\eqref{eq:fh} we get
 \[
   \begin{aligned}
    \langle u_{nm}, f \rangle_{L^2(B(0,R))} 
    &= \frac{1}{4\pi k^2 i^n} \int_{B(0,R)} d\vvx \, 
    f(\vvx)
    \left[  \int_{S(0,k)} dS(\vvz) \exp[ -i \vvx \cdot \vvz]
    \overline{Y}_{nm}(\vvz/k) \right]\\
    &= \frac{1}{4\pi k^2 i^n} \int_{S(0,k)} dS(\vvz)\, \overline{Y}_{nm}(
    \vvz/k) \int_{B(0,R)} d\vvx f(\vvx) \exp[-i \vvx \cdot \vvz]\\
    &= \frac{1}{4\pi k^2 i^n} \int_{S(0,k)} dS(\vvz)\,
    \overline{Y}_{nm}(\vvz/k)
    \hat f(\vvz),
   \end{aligned}
  \]
  which proves the desired result. 
\end{proof}
The best approximation $f_{R,N}$ is a Herglotz wave function on
$S(0,k)$. Its density is given in the next lemma.
\begin{lemma}
 \label{lem:density}
 The best approximation $f_{R,N}$ in \eqref{eq:proj} can be written as a Herglotz wave
 function on $S(0,k)$ with density
 \begin{equation}
  g_{R,N}(\vvz) = \frac{1}{(4\pi k^2)^2}\sum_{n=0}^N \sum_{m=-n}^n  \frac{\langle Y_{nm}(\cdot/k),\hat
   f\rangle_{L^2(S(0,k))}}{\langle u_{nm}, u_{nm}
   \rangle_{L^2(B(0,R))}} Y_{nm}(\vvz/k).
  \label{eq:frn:density}
 \end{equation}
\end{lemma}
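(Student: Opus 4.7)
The plan is straightforward: just substitute the two ingredients we have already proved (the Funk--Hecke representation \eqref{eq:fh} of each basis element $u_{nm}$, and the inner-product formula from \Cref{lem:innerprods}) into the projection formula \eqref{eq:proj}, and then rearrange to read off a Herglotz wave density.

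More concretely, I would start from
\[
 f_{R,N}(\vvx) = \sum_{n=0}^N \sum_{m=-n}^n \frac{\langle u_{nm}, f\rangle_{L^2(B(0,R))}}{\langle u_{nm}, u_{nm}\rangle_{L^2(B(0,R))}}\, u_{nm}(\vvx),
\]
then replace $\langle u_{nm}, f\rangle$ using \Cref{lem:innerprods} and $u_{nm}(\vvx)$ using the Funk--Hecke identity \eqref{eq:fh}. Each substitution contributes a factor $(4\pi k^2)^{-1}$ (so their product is $(4\pi k^2)^{-2}$), and the two $i^n$ factors are reciprocals, so they cancel cleanly. What remains is, for each $(n,m)$, a surface integral over $S(0,k)$ of $\exp[i\vvx \cdot \vvz]Y_{nm}(\vvz/k)$ multiplied by the scalar coefficient.

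The next step is to pull the finite double sum inside the surface integral, which is legitimate since it is a finite sum of continuous integrands. This yields
\[
 f_{R,N}(\vvx) = \int_{S(0,k)} g_{R,N}(\vvz)\, \exp[i\vvx \cdot \vvz]\, dS(\vvz),
\]
with $g_{R,N}$ exactly the expression in \eqref{eq:frn:density}. Comparing with the definition \eqref{eq:hwf} identifies $f_{R,N}$ as a Herglotz wave function with the stated density.

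There is no real obstacle here: the content is entirely bookkeeping — tracking the constants $(4\pi k^2)^{-1}$ and $i^n$ from each of the two substitutions, and observing that $\overline{Y}_{nm}$ from \Cref{lem:innerprods} combines with $Y_{nm}$ from \eqref{eq:fh} in such a way that the spherical-harmonic factor $Y_{nm}(\vvz/k)$ appears only once in the final density (the conjugate is absorbed into the scalar inner product $\langle Y_{nm}(\cdot/k),\hat f\rangle_{L^2(S(0,k))}$). The only care needed is to keep the two integration variables in the sphere integrals notationally distinct (the $\vvz$ in the Funk--Hecke representation of $u_{nm}$ versus the dummy variable in the inner product from \Cref{lem:innerprods}) so that the identification of the density is unambiguous.
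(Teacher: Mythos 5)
Your proposal is correct and follows essentially the same route as the paper: substitute the Funk--Hecke representation \eqref{eq:fh} for $u_{nm}$ and the formula of \cref{lem:innerprods} for $\langle u_{nm},f\rangle$ into \eqref{eq:proj}, exchange the finite sum with the surface integral, and read off the density, with the $i^n$ and $i^{-n}$ factors cancelling and the two $(4\pi k^2)^{-1}$ factors producing the constant $(4\pi k^2)^{-2}$. The only cosmetic difference is that the paper performs the Funk--Hecke substitution first and applies \cref{lem:innerprods} at the end, whereas you do both substitutions at once; the bookkeeping is identical.
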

\begin{proof}
 In the definition \eqref{eq:proj} of $f_{R,N}$ we use the Funk-Hecke
 formula \eqref{eq:fh} to replace the right-most $u_{nm}$ and get 
 \[
 \begin{aligned}
  f_{R,N}(\vvx) &= \sum_{n=0}^N \sum_{m=-n}^n \frac{\langle u_{nm},
   f\rangle}{\langle u_{nm}, u_{nm} \rangle} \frac{i^n}{4\pi k^2}\int_{S(0,k)} \exp[i\vvz \cdot
  \vvx] Y_{nm}(\vvz/k) dS(\vvz)\\
  &= \int_{S(0,k)}  \exp[i\vvz \cdot
  \vvx] g_{R,N}(\vvz) dS(\vvz),
 \end{aligned}
\]
where $g_{R,N}(\vvz)$ is given for $\vvz \in S(0,k)$ by
\[
 \begin{aligned}
 g_{R,N}(\vvz) &= \sum_{n=0}^N \sum_{m=-n}^n \frac{\langle u_{nm},
   f\rangle}{\langle u_{nm}, u_{nm} \rangle} \frac{i^n}{4\pi k^2}
   Y_{nm}(\vvz/k)\\
   &=\frac{1}{(4\pi k^2)^2}\sum_{n=0}^N \sum_{m=-n}^n  \frac{\langle Y_{nm}(\cdot/k),\hat
   f\rangle_{L^2(S(0,k))}}{\langle u_{nm}, u_{nm}
   \rangle_{L^2(B(0,R))}} Y_{nm}(\vvz/k),
 \end{aligned}
\]
where we used \cref{lem:innerprods} for the second equality.
\end{proof}

\subsection{Relation between projection and heuristic}
\label{sec:relation}
We now use the $R$ large asymptotic result for $\|u_{nm}\|_{L^2(B(0,R))}$ to show that
the Herglotz density of the best approximation $f_{R,N}$ approaches that
of 
the projection of $\hat f$ onto the spherical harmonics up to order $N$
in $S(0,k)$. This shows that the heuristic of filtering out everything
outside of the sphere $S(0,k)$ in spatial frequency is related to
approximating a function by Herglotz wave functions. Note that we need
to work in $L^2(B(0,R))$ because entire solutions to the Helmholtz
equation are not in $L^2(B(0,R))$ (because of their growth at infinity,
see e.g. \cite[\S 3.3]{Colton:1998:IAE}).
\begin{theorem}
\label{thm:relation}
Let $g_{R,N}$ be the Herglotz density on $S(0,k)$ defined in
\eqref{eq:frn:density}. Then for a fixed $N$ we have as $R \to \infty$
\[
 8\pi^2 R g_{R,N} \to  \sum_{n=0}^N \sum_{m=-n}^n \langle
 k^{-1}Y_{nm}(\cdot/k), \hat f \rangle_{L^2(S(0,k))} k^{-1}Y_{nm}(\cdot/k),
\]
where the convergence is understood in $L^2(S(0,k))$ and the limiting
function is the projection of $\hat f$ onto the spherical harmonic basis
of $S(0,k)$ up to order $N$.
\end{theorem}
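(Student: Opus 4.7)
The plan is to read off both sides from the identifications already made in Lemmas \ref{lem:density} and \ref{lem:unm}, multiply through by $8\pi^2 R$, and use that the sum defining $g_{R,N}$ is finite so the limit can be passed term by term.

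First I would substitute the formula \eqref{eq:frn:density} for $g_{R,N}$ into $8\pi^2 R g_{R,N}$ and collect the $R$-dependent factor, obtaining
\[
 8\pi^2 R g_{R,N}(\vvz) = \sum_{n=0}^N\sum_{m=-n}^n
 \frac{8\pi^2 R}{(4\pi k^2)^2 \|u_{nm}\|_{L^2(B(0,R))}^2}
 \langle Y_{nm}(\cdot/k),\hat f\rangle_{L^2(S(0,k))} Y_{nm}(\vvz/k).
\]
Next, \cref{lem:unm} tells us that as $R\to\infty$ the quantity $(2k^2/R)\|u_{nm}\|_{L^2(B(0,R))}^2 \to 1$, so the prefactor on each term satisfies
\[
 \frac{8\pi^2 R}{(4\pi k^2)^2 \|u_{nm}\|_{L^2(B(0,R))}^2}
 = \frac{8\pi^2}{16\pi^2 k^4} \cdot \frac{R}{\|u_{nm}\|_{L^2(B(0,R))}^2}
 \longrightarrow \frac{1}{k^2}.
\]
Since the sum has only $(N+1)^2$ terms and the coefficients $\langle Y_{nm}(\cdot/k),\hat f\rangle$ and basis functions $Y_{nm}(\cdot/k)$ do not depend on $R$, the limit can be taken term by term in $L^2(S(0,k))$, yielding $\sum_{n,m} k^{-2} \langle Y_{nm}(\cdot/k),\hat f\rangle_{L^2(S(0,k))} Y_{nm}(\vvz/k)$, which is exactly the claimed expression once the factors of $k^{-1}$ are distributed.

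To close the loop with the statement's interpretation as a projection, I would note that the rescaled spherical harmonics $k^{-1}Y_{nm}(\cdot/k)$ are orthonormal in $L^2(S(0,k))$: a change of variables $\vvz = k\vvw$ gives $dS(\vvz) = k^2\,dS(\vvw)$, so $\|Y_{nm}(\cdot/k)\|_{L^2(S(0,k))}^2 = k^2$ and inner products reduce to the orthonormality of $\{Y_{nm}\}$ on $S(0,1)$. Hence the limit is genuinely the orthogonal projection of $\hat f|_{S(0,k)}$ onto the span of $\{k^{-1}Y_{nm}(\cdot/k) : n\le N\}$. There is no real obstacle here beyond being careful with the asymptotic, since the finiteness of the sum sidesteps any uniformity issues one might worry about if $N$ were allowed to grow with $R$.
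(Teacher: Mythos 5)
Your proposal is correct and follows essentially the same route as the paper: substitute the density formula from \cref{lem:density}, use \cref{lem:unm} to send the $R$-dependent prefactor to $k^{-2}$ term by term (legitimate since the sum is finite), and invoke the orthonormality of $k^{-1}Y_{nm}(\cdot/k)$ in $L^2(S(0,k))$ to identify the limit as the projection of $\hat f$. The paper phrases the same computation in terms of the coefficients of $8\pi^2 R\, g_{R,N}$ along that orthonormal basis, which is only a cosmetic difference.
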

\begin{proof}
We recall that the functions $k^{-1}Y_{nm}(\cdot/k)$ form an orthonormal
basis of $L^2(S(0,k))$.  By \cref{lem:density}, the coefficient of $
8\pi^2 R g_{R,N}$ along the basis function $k^{-1} Y_{nm}(\cdot/k)$ is
\[
 \langle  k^{-1} Y_{nm}(\cdot/k), 8\pi^2 R g_{R,N}\rangle_{L^2(S(0,k))} =
 \frac{R}{2k^2} \frac{\langle k^{-1} Y_{nm}(\cdot/k),\hat
   f\rangle_{L^2(S(0,k))}}{\langle u_{nm}, u_{nm}
   \rangle_{L^2(B(0,R))}}.
\]
Using \cref{lem:unm} we get that as $R \to \infty$,
\[
 \langle  k^{-1} Y_{nm}(\cdot/k),8\pi^2 R g_{R,N} \rangle_{L^2(S(0,k))}
 \to \langle k^{-1}Y_{nm}(\cdot/k), \hat f \rangle_{L^2(S(0,k))}.
\]
This shows the desired result. 
\end{proof}

\section{Approximation by Herglotz wave functions restricted to a plane}
\label{sec:herglotz:plane}
We would like to find a Herglotz wave function whose restriction to a
plane is as close as possible to a function defined on the same plane.
To study this problem we introduce a weighted space of Herglotz wave
functions (\cref{sec:weighted}) and also a space of band-limited
functions (\cref{sec:bandlimited}). We establish a one-to-one
correspondence between these spaces and use this fact to find the best
approximation of a function in $L^2(\real^2)$ by a Herglotz wave
function restricted to a plane (\cref{sec:bestapprox}). Since time
reversal can be used to express the solution to the best approximation
by Herglotz wave functions in a volume (\cref{sec:herglotz:vol}), we
apply the same principle to approximate a function defined on a plane
and show that the time reversal solution is suboptimal (\cref{sec:tr}).
The final result is that the approximation problem we consider is (up to
multiplicative constants) equivalent to filtering out all the spatial
frequencies $\vxi$ such that $|\vxi| > k$ in the function we want
to approximate. This limits the resolution that is achievable by this
approach to about a wavelength, which is consistent with the Rayleigh
resolution limit for imaging with waves (see e.g.
\cite{Bleistein:2013:MSI}). To see this, consider the distribution
$\delta(\vx)$. Filtering out all the spatial frequencies of
$\delta(\vx)$ outside of $B(0,k)$ gives the function
\[
 (\chi_{B(0,k)})^\vee(\vx) =  (2\pi)^{-1} \frac{J_1(k|\vx|)}{|\vx|},
\]
where $\vee$ denotes the inverse Fourier transform.  The first zero of
$J_1(kz)/z$ is located at $z\approx 1.22\lambda/2$ (see e.g. \cite[\S
10]{dlmf}), thus the smallest feature we can resolve is about
1.22$\lambda$. Hereinafter, vectors $\vvx \in \real^3$ have
arrows and the first two components of $\vvx \in \real^3$ are denoted by
$\vx \in \real^2$, i.e.  we have $\vvx = (\vx,x_3)$.

\subsection{A weighted space of Herglotz wave functions}
\label{sec:weighted}
For a given wavenumber $k$, let us define the space $S_k$ of Herglotz
wave functions with density in a weighted $L^2$ space on the sphere of
radius $k$:
\begin{equation}
 \label{eq:sk}
 S_k = \Mcb{ \int_{S(0,k)} g(\vvz) e^{i\vvz \cdot \vvx} dS(\vvz)
 ~|~ g \in L^2_W(S(0,k)) },
\end{equation}
where $g \in L^2_W(S(0,k))$ if and only if 
\[
\int_{S(0,k)} \frac{|g(\vvz)|^2}{|z_3|} dS(\vvz) < \infty.
\]
Naturally, we need to make sure that the integral in the definition of a
function in $S_k$ is well defined. This follows from the Cauchy-Schwartz
inequality:
\[ 
 \begin{aligned}
  \int_{S(0,k)} g(\vvz) e^{i\vvz \cdot \vvx} dS(\vvz) &= \int_{S(0,k)}
  \frac{g(\vvz)}{|z_3|^{1/2}} |z_3|^{1/2} e^{i\vvz \cdot \vvx} dS(\vvz)\\
  &\leq \Mb{\int_{S(0,k)} \frac{|g(\vvz)|^2}{|z_3|} dS(\vvz) }^{1/2}
  \Mb{\int_{S(0,k)} |z_3| dS(\vvz)}^{1/2},
 \end{aligned}
\]
where the upper bound is finite when $g \in L^2_W(S(0,k))$.

\subsection{Relation between Herglotz wave functions and band-limited
functions}
\label{sec:bandlimited}
For the wavenumber $k$, we define the space of
band-limited functions 
\begin{equation}
 \label{eq:bk}
 B_k = \{ f \in L^2(\real^2) ~|~ \supp \hat f \subset B(0,k) \},
\end{equation}
where $B(0,k)$ is the ball of radius $k$ centered at the origin.
In the next lemma, we
show that $B_k$ can be identified with the space of restrictions of
elements of $S_k$ to the plane $x_3 = 0$. In fact we only need to
consider the subset $S_{k+}$ of $S_k$ consisting of Herglotz wave
functions with density $g \in L^2_W(S(0,k))$, supported on the half sphere
$\{ \vvz ~|~ \vvz\in S(0,k), z_3 \geq 0\}$.
\begin{lemma}
 \label{lem:herglotz}
 Let $u \in S_{k+}$, then the function $f : \vx \to u(\vx,0)$ is a function
 in $B_k$.  Conversely for any function $f \in B_k$, there is a Herglotz
 wave function $u \in S_{k+}$ such that $f(\vx) = u(\vx,0)$.
\end{lemma}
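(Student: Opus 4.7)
The plan is to parametrize the upper hemisphere $S(0,k)_+ = \{ \vvz \in S(0,k) ~|~ z_3 \geq 0 \}$ as the graph $\vvz = (\vz, \sqrt{k^2 - |\vz|^2})$ over $\vz \in B(0,k) \subset \real^2$. Under this parametrization the surface measure pulls back to $dS(\vvz) = (k/z_3)\, d\vz$, so restricting a Herglotz wave function with density $g$ supported in $S(0,k)_+$ to the plane $x_3 = 0$ becomes an inverse Fourier transform (up to normalization) of the planar function
\[
 h(\vz) = \frac{k}{\sqrt{k^2 - |\vz|^2}}\, g\M{\vz, \sqrt{k^2 - |\vz|^2}}\, \chi_{B(0,k)}(\vz),
\]
which is manifestly supported in $\overline{B(0,k)}$.

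For the forward direction, I would change variables from $\vz$ back to $\vvz$ to compute
\[
 \int_{\real^2} |h(\vz)|^2 \, d\vz = k \int_{S(0,k)_+} \frac{|g(\vvz)|^2}{z_3} \, dS(\vvz),
\]
which is finite because $g \in L^2_W(S(0,k))$. Hence $h \in L^2(\real^2)$, and Plancherel's theorem gives $u(\cdot, 0) \in L^2(\real^2)$ with Fourier transform a scalar multiple of $h$; since $h$ is supported in $\overline{B(0,k)}$, we conclude $u(\cdot,0) \in B_k$. For the converse, given $f \in B_k$ I would reverse-engineer the density by setting
\[
 g(\vvz) = \frac{z_3}{(2\pi)^2 k}\, \hat f(\vz) \quad \text{for } \vvz = (\vz, z_3) \in S(0,k)_+
\]
and extending by zero on the lower hemisphere. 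A direct computation yields $\int_{S(0,k)} |g(\vvz)|^2/|z_3|\, dS(\vvz) = ((2\pi)^4 k)^{-1} \|\hat f\|_{L^2(B(0,k))}^2 < \infty$, so $g \in L^2_W(S(0,k))$; substituting this $g$ into the Herglotz integral and applying the same change of variables recovers the two-dimensional Fourier inversion formula for $f$, yielding $u(\vx,0) = f(\vx)$.

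The essential work is bookkeeping the Jacobian $k/z_3$ and the chosen Fourier transform normalization. The one potentially subtle point is that the integrand in the Herglotz representation has a $k/z_3$ singularity at the equator $z_3 = 0$, but this is precisely what the weight $1/|z_3|$ in $L^2_W(S(0,k))$ is designed to compensate for, in the same spirit as the Cauchy-Schwarz estimate already displayed in \cref{sec:weighted}. I do not anticipate any other conceptual obstacle.
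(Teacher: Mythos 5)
Your proposal is correct and follows essentially the same route as the paper: parametrizing the upper hemisphere over $B(0,k)$ with Jacobian $k/z_3$, checking membership in $L^2(\real^2)$ (respectively $L^2_W(S(0,k))$) by exactly the weighted-norm computations the paper performs, and reverse-engineering the density $g(\vvz)=(2\pi)^{-2}\hat f(\vz)(z_3)_+/k$ for the converse. No substantive difference from the paper's argument.
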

\begin{proof}
Let $u \in S_{k+}$ and let $g$ be its density.  Restricting $u$ to the
plane $x_3 = 0$, we get
\[
\begin{aligned}
u(\vx,0) &= \int_{S(0,k),z_3\ge 0} g(\vvz) e^{i(\vz,z_3)\cdot (\vx,0)}
dS(\vvz)\\
&= \int_{B(0,k)} g(\vz,\sqrt{k^2 - |\vz|^2}) e^{i\vz\cdot
\vx}\frac{k}{\sqrt{k^2-|\vz|^2}}d\vz.
\end{aligned}
\]
Therefore $u(\vx,0)$ is the Fourier transform of the $L^2(B(0,k))$ function
\begin{equation}
 v(\vz) = (2\pi)^2 g(\vz,\sqrt{k^2 - |\vz|^2}) \frac{k}{\sqrt{k^2-|\vz|^2}}\chi_{B(0,k)}(\vz),
\end{equation}
and $u(\vx,0) \in B_k$. Indeed we have
\[
\begin{aligned}
\int_{B(0,k)} |v(\vz)|^2 d\vz &= (2\pi)^4 k^2 \int_{B(0,k)} \bigg| \frac{g(\vz, \sqrt{k^2 - |\vz|^2})}{\sqrt{k^2 - |\vz|^2}} \bigg|^2 d\vz \\
&= (2\pi)^4 k^2 \int_{S(0,k), z_3\ge 0} \frac{|g(\vvz)|^2}{z_3^2} \frac{\sqrt{k^2-|\vz|^2}}{k} dS(\vvz) \\
&= (2\pi)^4 k \int_{S(0,k), z_3\ge 0} \frac{|g(\vvz)|^2}{z_3} dS(\vvz) <
\infty. 
\end{aligned}
\]
Now take a function $f\in B_k$. Since $\hat f$ is supported in $B(0,k)$ we have
\begin{align*}
f(\vx) &= (2\pi)^{-2} \int_{B(0,k)} \hat{f}(\vz)e^{i\vz\cdot \vx} d\vz \\
&= (2\pi)^{-2} \int_{S(0,k),z_3\ge 0} \hat{f}(\vz) e^{i(\vz,z_3)\cdot
(\vx,0)}\frac{\sqrt{k^2-|\vz|^2}}{k} dS(\vvz).
\end{align*}
Thus $f(\vx)$ is the restriction to the plane $x_3=0$ of a Herglotz wave
function with density
\[
 g(\vvz) = (2\pi)^{-2} \hat{f}(\vz) \frac{\sqrt{k^2-|\vz|^2}}{k}
 \chi_{z_3 \geq 0}(\vvz) = (2\pi)^{-2} \hat{f}(\vz) \frac{(z_3)_+}{k},
\]
where $x_+ \equiv  (x + |x|)/2$. Notice $g \in L^2_W(S(0,k))$ because
\[
\begin{aligned}
\int_{S(0,k)} \frac{|g(\vvz)|^2}{|z_3|} dS(\vvz) &=
\frac{(2\pi)^{-4}}{k^2}  \int_{S(0,k), z_3\ge 0} \frac{|\hat{f}(\vz)
(z_3)_+|^2}{|z_3|} dS(\vvz) \\
&= \frac{(2\pi)^{-4}}{k} \int_{B(0,k)} |\hat{f}(\vz)|^2 d\vz < \infty.
\end{aligned}
\]
\end{proof}

\subsection{Best approximation by Herglotz wave functions restricted to
a plane}
\label{sec:bestapprox}
Here we use the one-to-one relationship between elements of $B_k$ and
elements of $S_{k+}|_{x_3=0}$ to find the best approximation
of a function $f\in L^2(\real^2)$ by a Herglotz wave function in $S_{k+}|_{x_3=0}$.
Furthermore, we show that there is nothing to gain if we consider
Herglotz wave functions supported over all of $S(0,k)$, instead of
only supported on the upper half sphere, that is replacing $S_{k+}$ by
$S_k$.

\begin{theorem}
\label{thm:herglotz}
Let $f \in L^2(\real^2)$. The best approximation
of $f$ by a Herglotz wave function in $S_{k+}$ restricted to the plane
$x_3=0$ has density
\begin{equation}
 g(\vvz) = (2\pi)^{-2} \chi_{B(0,k)}(\vz) \hat f (\vz) \frac{(z_3)_+}{k},
 ~\text{with}~|\vvz| = k.
 \label{eq:density}
\end{equation}
\end{theorem}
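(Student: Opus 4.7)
The plan is to reduce the problem to a straightforward orthogonal projection in $L^2(\real^2)$ via the bijection established in \cref{lem:herglotz}. By that lemma, the set of restrictions $\{u|_{x_3=0} : u \in S_{k+}\}$ equals $B_k$, the space of $L^2(\real^2)$ functions whose Fourier transform is supported in $B(0,k)$. So the original approximation problem in $L^2(\real^2)$ over $S_{k+}|_{x_3=0}$ is exactly the same as finding the best approximation of $f$ by an element of $B_k$.

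Next, I would observe that $B_k$ is a closed subspace of $L^2(\real^2)$: since the Fourier transform is (up to a constant) an $L^2$ isometry, and $B_k$ is the preimage under $\hat{\cdot}$ of the closed subspace $\{h \in L^2(\real^2) : h = \chi_{B(0,k)} h\}$, the closedness follows. The orthogonal projection of $f$ onto $B_k$ is then given in the Fourier domain by multiplication by $\chi_{B(0,k)}$; explicitly, the best approximation $f^\star$ satisfies $\hat{f^\star} = \chi_{B(0,k)} \hat f$, and by Plancherel this minimizes $\|f - h\|_{L^2(\real^2)}$ over $h \in B_k$. This gives the best approximation in closed form as a function on the plane.

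Finally, I would use the constructive direction of \cref{lem:herglotz} to write $f^\star$ as the restriction of a Herglotz wave function with density on $S(0,k)$. Plugging $\hat{f^\star} = \chi_{B(0,k)} \hat f$ into the formula
\[
 g(\vvz) = (2\pi)^{-2} \hat{f^\star}(\vz) \frac{(z_3)_+}{k}
\]
from that lemma yields the claimed density \eqref{eq:density}. The support restriction to the upper half sphere is automatic from the factor $(z_3)_+$, and $g \in L^2_W(S(0,k))$ by the computation in \cref{lem:herglotz} since $\hat f \chi_{B(0,k)} \in L^2(B(0,k))$.

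The main (mild) obstacle is conceptual rather than computational: one has to be careful that $S_{k+}|_{x_3=0}$ is exactly $B_k$ as a subset of $L^2(\real^2)$, not just contained in it, so that orthogonal projection in $L^2(\real^2)$ onto $B_k$ really gives the best element of $S_{k+}|_{x_3=0}$. This is provided by the bijection in \cref{lem:herglotz}. A secondary point worth a sentence of justification is uniqueness: since the map from densities $g \in L^2_W(S(0,k))$ supported on the upper hemisphere to their plane restriction is injective (its inverse is read off of the Fourier transform), the density \eqref{eq:density} is the unique optimal Herglotz density in $S_{k+}$.
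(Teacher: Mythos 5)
Your proposal is correct and follows essentially the same route as the paper: identify $S_{k+}|_{x_3=0}$ with $B_k$ via \cref{lem:herglotz}, take the orthogonal projection $\cF^{-1}\cB_k\cF f$ onto $B_k$, and read off the density from the constructive direction of the lemma, checking $g \in L^2_W(S(0,k))$. The extra remarks on closedness of $B_k$ and uniqueness of the density are fine additions but do not change the argument.
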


\begin{proof}
In operator notation, the orthogonal projection onto $B_k$ is $\cF^{-1}
\cB_k \cF$, where $\cF$ is the Fourier transform operator and $\cB_k$ is
the operator of multiplication by the function $\chi_{B(0,k)}$. That
this is indeed an orthogonal projector can be easily verified, see e.g.
\cite{Slepian:1964:PSW}. Therefore the best approximation of $f$ by a
function in $B_k$ is $\tilde{f} = \cF^{-1} \cB_k \cF f$. By
\cref{lem:herglotz}, $\tilde{f}$ can be identified with the restriction
to the plane $x_3 = 0$ of some Herglotz wave function in $S_{k+}$ with
density being \eqref{eq:density}. It follows from the proof of
\cref{lem:herglotz} that $g \in L^2_W(S(0,k))$. Indeed $\hat f \in
L^2(\real^2)$ implies that $\chi_{B(0,k)} \hat f \in L^2(B(0,k))$.
\end{proof}

A natural question to ask is whether we get a better approximation if we
approximate with all the Herglotz wave functions in $S_k$, instead
of limiting ourselves to $S_{k+}$. The approximation we get in
\cref{thm:herglotz} with functions in $S_{k+}$ is already optimal, as we
show next.

\begin{corollary}
Let $f \in  L^2(\real^2)$. The best approximation
of $f$ by a Herglotz wave function in $S_k$ restricted to the plane
$x_3=0$ has density \eqref{eq:density} on the sphere $|\vvz| = k$.
\end{corollary}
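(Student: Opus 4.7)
The plan is to reduce to \cref{thm:herglotz} by showing that the sets of restrictions to the plane $x_3 = 0$ coincide, that is, $\Mcb{u|_{x_3 = 0} : u \in S_{k+}} = \Mcb{u|_{x_3 = 0} : u \in S_k}$. The inclusion ``$\subseteq$'' is immediate from $S_{k+} \subset S_k$, so once we verify ``$\supseteq$'' the corollary follows because the best approximation of $f$ in the $L^2(\real^2)$ sense only depends on the set of candidate approximating functions, which by \cref{thm:herglotz} yields the density in \eqref{eq:density}.

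To prove ``$\supseteq$'', I would exploit the fact that when we restrict to $x_3 = 0$ the exponential loses all dependence on $z_3$, since $e^{i\vvz \cdot (\vx,0)} = e^{i\vz \cdot \vx}$. Thus for $u \in S_k$ with density $g \in L^2_W(S(0,k))$, splitting the sphere into the upper and lower hemispheres and applying the reflection $z_3 \to -z_3$ to the lower one gives
\[
u(\vx,0) = \int_{S(0,k),\, z_3 \geq 0} \Mb{ g(\vz,z_3) + g(\vz,-z_3) } e^{i\vz \cdot \vx}\, dS(\vvz),
\]
so $u|_{x_3 = 0}$ is the plane restriction of the Herglotz wave function whose density is $\tilde g(\vvz) = g(\vz,z_3) + g(\vz,-z_3)$ for $z_3 \geq 0$ and $0$ otherwise.

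The one thing that actually requires checking is that $\tilde g \in L^2_W(S(0,k))$, i.e.\ that we have not left the admissible class by symmetrizing. This is the main technical point, but it is mild: the bound $|a+b|^2 \leq 2(|a|^2 + |b|^2)$ together with the invariance of the surface measure on $S(0,k)$ under $z_3 \to -z_3$ gives $\|\tilde g\|_{L^2_W(S(0,k))}^2 \leq 2 \|g\|_{L^2_W(S(0,k))}^2 < \infty$, so $\tilde g$ indeed defines an element of $S_{k+}$ with the same trace on $x_3 = 0$ as $u$. This establishes the equality of the two restriction sets, and invoking \cref{thm:herglotz} completes the proof.
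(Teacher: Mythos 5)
Your proposal is correct and follows essentially the same route as the paper: split $S(0,k)$ into hemispheres, reflect the lower one onto the upper one, observe that the restriction to $x_3=0$ only sees the symmetrized density $g(\vz,z_3)+g(\vz,-z_3)$, and conclude that restricting $S_k$ to the plane yields no more functions than restricting $S_{k+}$, so \cref{thm:herglotz} applies. Your explicit check that the symmetrized density stays in $L^2_W(S(0,k))$ (via $|a+b|^2\leq 2(|a|^2+|b|^2)$ and reflection invariance of the surface measure) is a small refinement of the paper's ``clearly $g_+$ and $g_-$ are in $L^2_W$'' step, but the argument is the same.
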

\begin{proof}
We show that the space of functions obtained by restricting $S_k$ to
the plane $x_3=0$ is $B_k$, i.e. identical to that obtained by
restricting $S_{k+}$.
For a $u\in S_k$ with density $g(\vvz)$ we have
\[
\begin{aligned}
u(\vvx) &= \int_{S(0,k)} e^{i \vvx \cdot \vvz} g(\vvz) dS(\vvz) \\
&= \int_{S(0,k), z_3>0} [e^{i\vvx \cdot (\vz, z_3)} g_+(\vvz) + e^{i\vvx
\cdot (\vz, -z_3)} g_-(\vvz)] dS(\vvz),
\end{aligned}
\]
where $g_+(\vvz) = g(\vz, z_3)$ and $g_-(\vvz) = g(\vz, -z_3)$.
Restricting $u$ to the plane $x_3=0$ we get 
\[
\begin{aligned}
u(\vx, 0) &= \int_{S(0,k), z_3>0} e^{i\vx \cdot \vz} [g_+(\vvz) +
g_-(\vvz)] dS(\vvz).
\end{aligned}
\]
Clearly $g_+$ and $g_-$ are in $L^2_W(S(0,k))$, hence we have that
$u(\vx,0) \in B_k$.
\end{proof}

\subsection{Does time reversal solve the approximation problem?}
\label{sec:tr}
In \cref{sec:timerev} we saw that time reversal of a volumetric source
distribution $f$ is equivalent to convolution with
$(4\pi)^{-1}j_0(k|\cdot|)$ and is related to the best approximation in a
ball (\cref{sec:herglotz:vol}). For functions that are supported on the
plane $x_3=0$, we show in \cref{thm:2dtr} that
convolution with $j_0(k|\cdot|)$  is a
filter with spatial frequency response supported on $B(0,k)$, as
the filter we obtained in \cref{sec:bestapprox} but suboptimal.
\begin{theorem}
 \label{thm:2dtr}
 Let $f\in L^2(\real^2)$ be such that
 \[
  \int_{B(0,k)} \frac{|\hat f(\vxi)|^2}{k^2 - |\vxi|^2} d\vxi <
  \infty.
 \]
 Time reversal of
 point sources modulated by $f$ and located on the plane $x_3=0$
 gives the Helmholtz equation solution $u(\vvx) =
 (j_0(k|\cdot|)*f)(\vvx)$, where the convolution is over $x_1,x_2$.
 The restriction of $u(\vvx)$ to the plane
 $x_3 = 0$ has Fourier transform
 \[
  (u(\vx,0))^\wedge = \frac{2\pi}{k \sqrt{k^2 - |\vxi|^2}} \hat
  f(\vxi) \chi_{B(0,k)}(\vxi).
 \]
 Moreover, $u(\vvx)$ is a Herglotz wave function with density 
 \[
  g(\vvz) = (2\pi)^{-1} k^{-2} \hat f (\vz) \chi_{z_3>0}(\vvz),
 \]
on the sphere $|\vvz| = k$. 
\end{theorem}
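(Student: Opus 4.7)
The plan is to first determine the 2D Fourier transform (in $\vx$, with $x_3$ as a parameter) of the convolution kernel $\vx \mapsto j_0(k|(\vx, x_3)|)$, then apply the convolution theorem, and finally use \cref{lem:herglotz} to identify the Herglotz density that reproduces $u(\vx,0)$ on the plane.

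First I would apply the Funk--Hecke formula as in \cref{sec:conv} to write
\[
 j_0(k|(\vx, x_3)|) = \frac{1}{4\pi k^2} \int_{S(0,k)} e^{i\vx \cdot \vz + ix_3 z_3} dS(\vvz),
\]
then parametrize $S(0,k)$ as a graph over $B(0,k)$ in the $z_3$ variable, splitting into the upper and lower hemispheres (each contributing a factor $k/\sqrt{k^2-|\vz|^2}\,d\vz$). The two exponentials $e^{\pm ix_3\sqrt{k^2-|\vz|^2}}$ combine into $2\cos(x_3\sqrt{k^2-|\vz|^2})$, so that
\[
 j_0(k|(\vx,x_3)|) = \frac{1}{2\pi k} \int_{B(0,k)} e^{i\vx \cdot \vz} \frac{\cos(x_3\sqrt{k^2-|\vz|^2})}{\sqrt{k^2-|\vz|^2}} d\vz,
\]
from which one can read off the 2D Fourier transform in $\vx$. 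Specializing to $x_3 = 0$ and applying the convolution theorem immediately yields the claimed expression for $(u(\vx,0))^\wedge$.

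For the Herglotz density, I would note that the explicit formula for $(u(\vx,0))^\wedge$ shows that $u(\vx,0) \in B_k$, and the hypothesis $\int_{B(0,k)} |\hat f(\vxi)|^2/(k^2-|\vxi|^2)\, d\vxi < \infty$ ensures $u(\vx,0) \in L^2(\real^2)$, so that \cref{lem:herglotz} applies. That lemma identifies the Herglotz density on the upper hemisphere as
\[
 g(\vvz) = (2\pi)^{-2} \, (u(\cdot,0))^\wedge(\vz) \, \frac{(z_3)_+}{k} = (2\pi)^{-2} \frac{2\pi}{k\sqrt{k^2-|\vz|^2}} \hat f(\vz) \chi_{B(0,k)}(\vz) \frac{(z_3)_+}{k}.
\]
Using the identity $z_3 = \sqrt{k^2-|\vz|^2}$ on the upper hemisphere collapses $(z_3)_+/\sqrt{k^2-|\vz|^2}$ to $\chi_{z_3>0}(\vvz)$, leaving $g(\vvz) = (2\pi k^2)^{-1} \hat f(\vz) \chi_{z_3>0}(\vvz)$, exactly as claimed.

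The main obstacle I expect is the handling of the singularity of $1/\sqrt{k^2-|\vxi|^2}$ at $|\vxi| = k$: without the integrability hypothesis, $(u(\vx,0))^\wedge$ need not be square integrable and the identification via \cref{lem:herglotz} is not justified. The assumed bound is exactly what is needed to push $\hat f/\sqrt{k^2-|\cdot|^2}\chi_{B(0,k)}$ into $L^2(\real^2)$ and to legitimize the Fubini interchange in Step 1.
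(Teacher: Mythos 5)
Your proposal is correct and follows essentially the same route as the paper: apply Funk--Hecke to express $j_0(k|\cdot|)$ as a Herglotz wave function with constant density, parametrize the sphere over $B(0,k)$ to read off the 2D Fourier transform $\frac{2\pi}{k\sqrt{k^2-|\vxi|^2}}\chi_{B(0,k)}$, use the convolution theorem, and then invoke \cref{lem:herglotz} (equivalently \cref{thm:herglotz}) together with the integrability hypothesis to place $u(\cdot,0)$ in $B_k$ and identify the density. The only cosmetic differences are that you keep $x_3$ general (combining hemispheres into a cosine) before setting $x_3=0$, whereas the paper folds the lower hemisphere into the upper one directly on the plane, and you carry out explicitly the density simplification that the paper delegates to \cref{thm:herglotz}; both computations agree with the paper's constants.
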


\begin{proof}
From the Funk-Hecke formula (see e.g. \cite[\S 2.4]{Colton:1998:IAE}), we get
\begin{equation}
 j_0(k|\vvx|) = \frac{1}{4\pi k^2}\int_{S(0,k)} e^{i \vvz
 \cdot \vvx} dS(\vvz),
 \label{eq:funk}
\end{equation}
and thus $j_0(k|\vvx|)$ is a Herglotz wave function with constant
density $1/(4\pi k^2)$ on $S(0,k)$. The restriction $j_0(k|\vx|) = j_0(k|(\vx,0)|)$ of
$j_0(k|\vvx|)$ to the plane $x_3 = 0$ is
\[
 j_0(k|\vx|) =  \frac{1}{4\pi k^2}\int_{S(0,k)} e^{i \vz
 \cdot \vx} dS(\vvz) = \frac{1}{2\pi k^2}\int_{S(0,k),z_3>0} e^{i \vz
 \cdot \vx} dS(\vvz),
\]
where the last equality comes from changing variables in the lower half
sphere. By a reasoning similar to the proof of \cref{lem:herglotz}, the Fourier
transform of $j_0(k|\vx|)$ as a function on $\real^2$ is:
\[
 [j_0(k|\cdot|)]^\wedge(\vxi) = \frac{2 \pi}{k \sqrt{k^2-|\vxi|^2}}
 \chi_{B(0,k)}(\vxi).
\]
In the frequency domain the convolution becomes
\[
 \begin{aligned}
  (j_0(k|\cdot|) * f)^\wedge(\vxi) & =  [j_0(k|\cdot|)]^\wedge(\vxi) \hat
  f(\vxi)\\
  &= \frac{2\pi}{k \sqrt{k^2-|\vxi|^2}}
 \hat{f}(\vxi)\chi_{B(0,k)} (\vxi).
 \end{aligned}
\]
Hence the hypothesis on $f$ guarantees that $j_0 (k|\cdot|) * f \in B_k$. By
\cref{lem:herglotz}, we know that $j_0(k|\cdot|) * f$ is a Herglotz wave
function with bounded density supported on the half sphere $\{\vvz ~|~
\vvz\in S(0,k), z_3\ge 0\}$. Its density can be obtained using
\cref{thm:herglotz}.
\end{proof}

\section{Numerical experiments}
\label{sec:numerics}
We saw in \cref{sec:herglotz:vol} that the best approximation of a
function $f$ by Herglotz wave functions is essentially given by keeping
only the Fourier components of $f$ that lie on the sphere $S(0,k)$. We
illustrate this procedure with numerical experiments in
\cref{sec:numerics:volume}, where as expected we get poor approximations
of the function $f$ in a volume. When the goal is to approximate a
function in the plane by Herglotz wave functions restricted to the same
plane, we saw in \cref{sec:herglotz:plane} that we can expect the
approximation to be a low pass filtered version of the function we wish
to approximate.  We illustrate this procedure and compare it to time
reversal in \cref{sec:numerics:area}.

\subsection{Approximation on a volume}
\label{sec:numerics:volume}
We recall from \cref{sec:herglotz:vol} that the best approximation of a
function by spherical wave functions on a ball of radius $R$ is a
Herglotz wave function with density given by \eqref{eq:frn:density}.
Instead of calculating \eqref{eq:frn:density}, we use the asymptotic
result in \cref{thm:relation} which shows that the density
\eqref{eq:frn:density} approaches $\hat f|_{S(0,k)}$ in some sense.
We approximate the Herglotz wave function with density $\hat
f|_{S(0,k)}$ as follows.
\begin{enumerate}[Step 1.]
 \item Discretize the sphere $S(0,k)$ with a Delaunay triangulation.
 \item  Use a uniform spatial grid  to discretize $f$ on a cube. Using
  the DFT, this gives an approximation to $\hat f$ on a uniform (spatial)
  frequency grid.
 \item Approximate $\hat f$ at the triangle centers of
  the triangulation of $S(0,k)$ by linearly interpolating the
  $\hat f$ calculated in Step 2.
 \item The Herglotz wave function $u$ with density $\hat{f}$ is given by
  the integral \eqref{eq:hwf} with $g \equiv \hat f$, that we approximate by
  assuming it is piecewise constant on the triangles of the
  triangulation of $S(0,k)$. Thus $u$ is 
  approximated by a finite sum of plane waves with wavenumber $k$.
\end{enumerate}
We illustrate this procedure in \cref{fig:tetrahedron}, with a function
$f$ that is related to
a tetrahedron $T$ with center of mass at the origin and circumscribing
sphere of radius $5\lambda$ and is given by
\[
 f(\vvx) =  |T| \chi_{B(0,5\lambda)}(\vvx) - |B(0,5\lambda)|
 \chi_T(\vvx),
\]
where $|A|$ denotes the volume of some region $A$. We chose this $f$
because its nodal set is $\partial T$, the boundary of the tetrahedron,
and because $\int f(\vvx) d\vvx = 0$. This last property ensures that
$\hat f(\vvzero) = 0$, as is the case for all Herglotz wave functions.
The function $f$ is sampled on a uniform grid with $256^3$ points in the
cube $[-5\lambda,5\lambda]^3$. This corresponds to a uniform grid in
spatial frequency within the cube $[-12.8k,12.8k]^3$ with identical number
of points. The triangulation of $S(0,k)$ that we used consisted of 7292
triangles and was obtained using the DistMesh package
\cite{Persson:2004:SMG}. The visualization of the resulting Herglotz
wave function $u$ is done in slices $(x,y) \in [-5\lambda,5\lambda]^2$
that are sampled with $100^2$ uniformly spaced points.

Since the asymptotic in \cref{thm:relation} is not a projection, the
resulting Herglotz wave function will be off by a scaling factor, which
is why we include a comparison of the zero level sets in
\cref{fig:tetrahedron}. This numerical experiment illustrates the poor
approximation that was expected in a volume. Indeed this function has
significant Fourier components outside of $S(0,k)$ that are filtered out
in $u$. Intuitively, we are trying to control a volume with only a two
dimensional field (the Herglotz wave function density).

\begin{figure}
\centering
\begin{tabular}{ccc}
	\includegraphics[width = 0.30\linewidth]{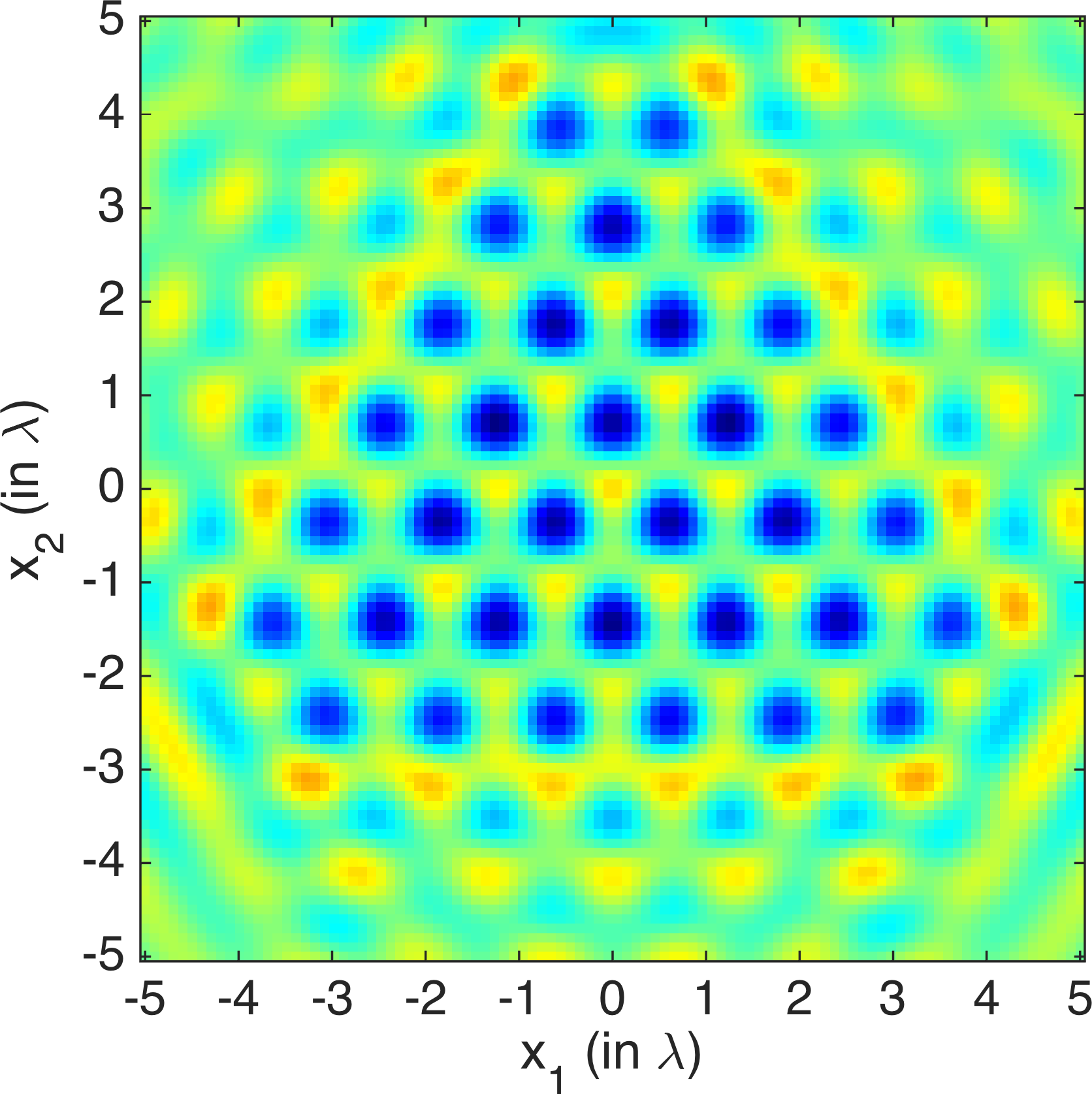}
&
	\includegraphics[width = 0.30\linewidth]{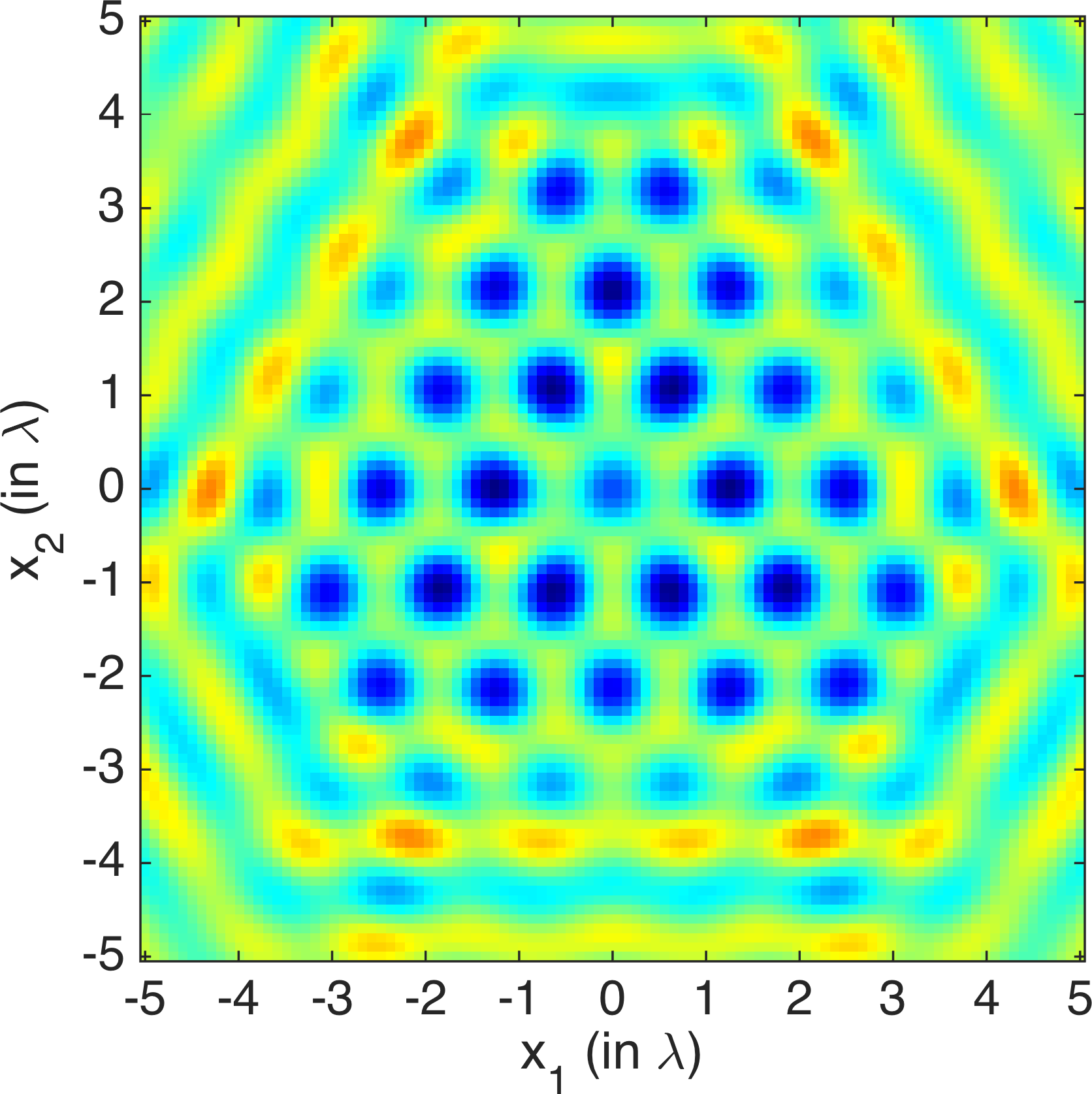}
&
	\includegraphics[width = 0.30\linewidth]{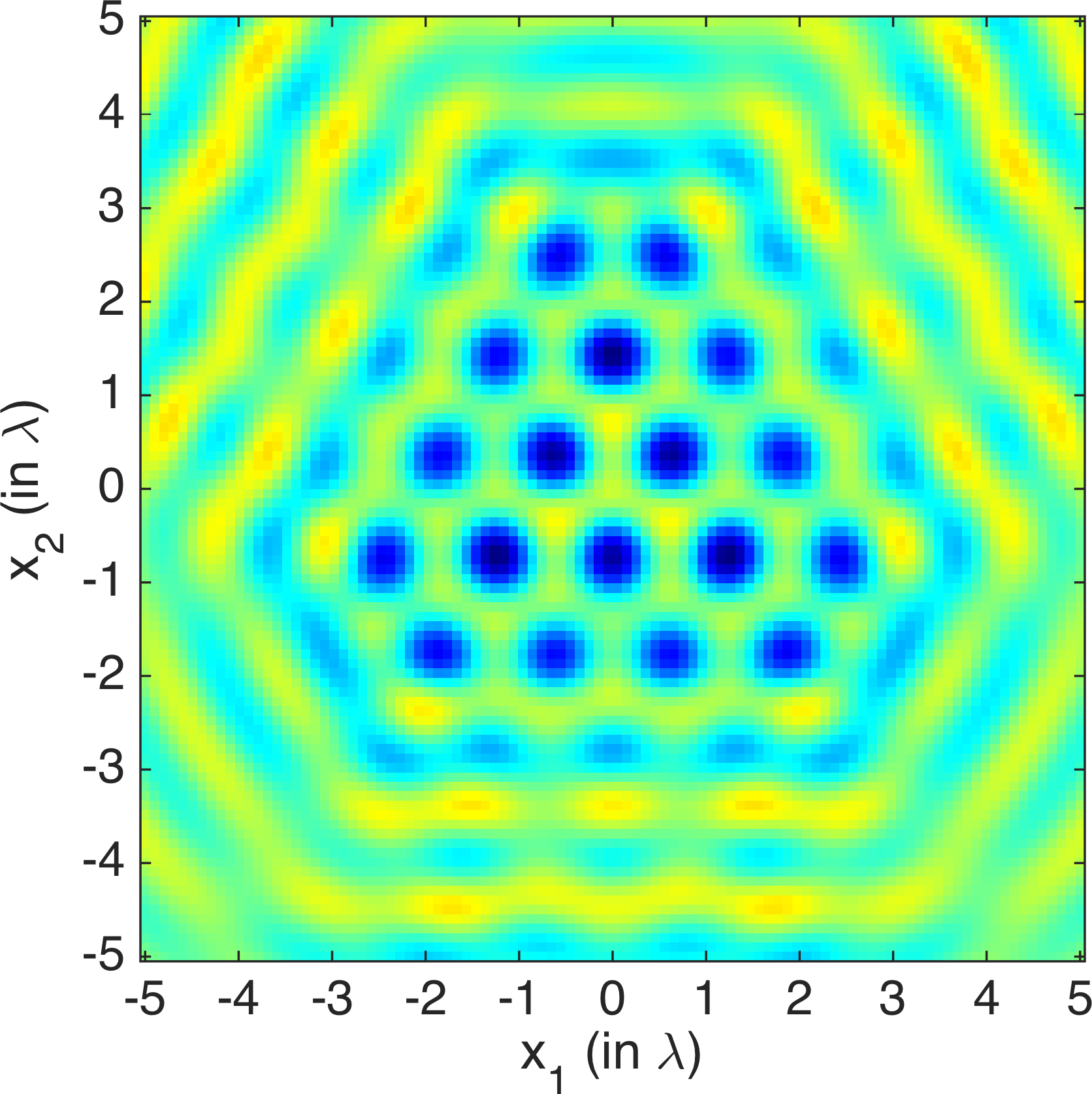}
\\
	\includegraphics[width =0.30\linewidth]{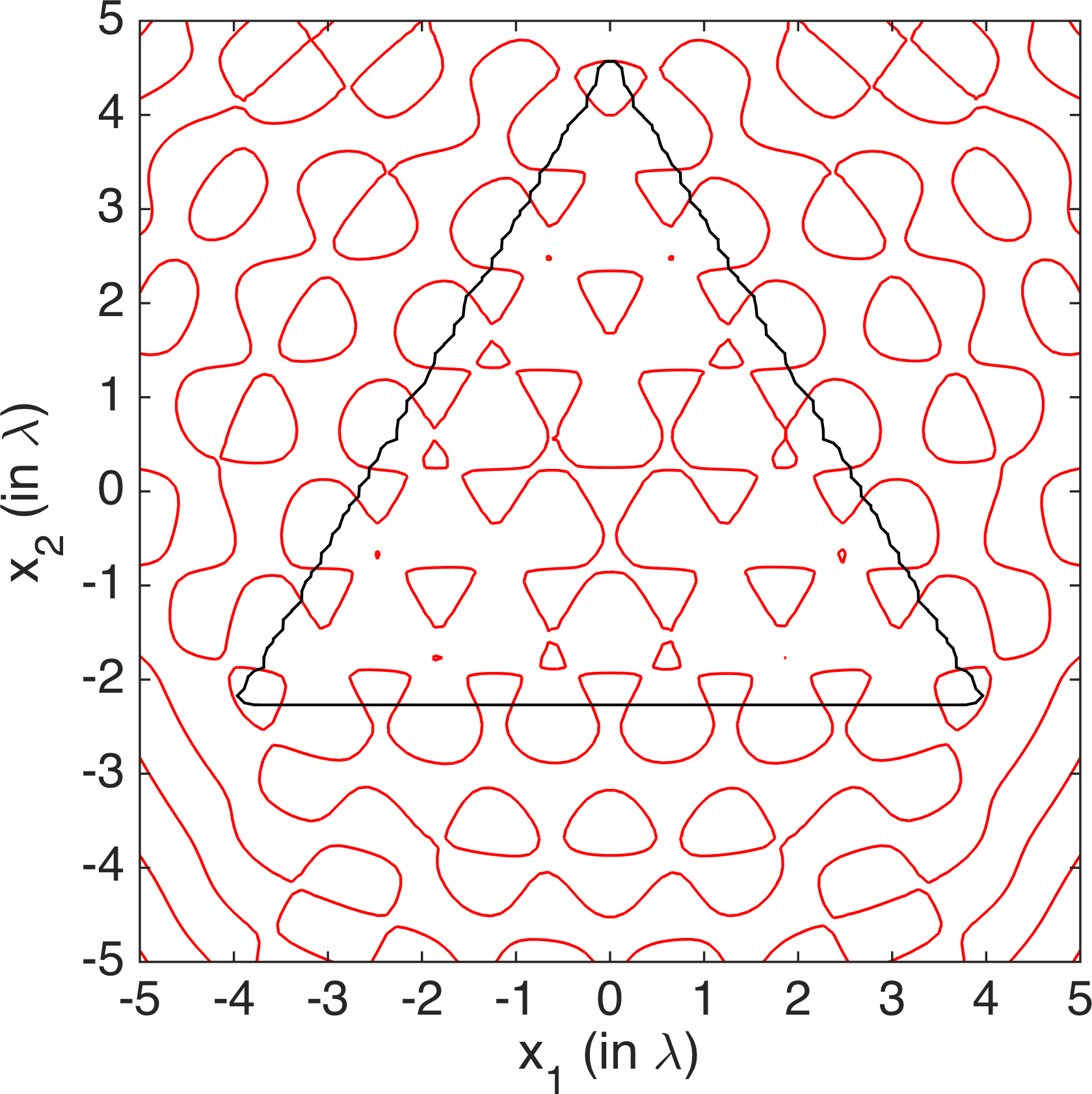}
&
	\includegraphics[width = 0.30\linewidth]{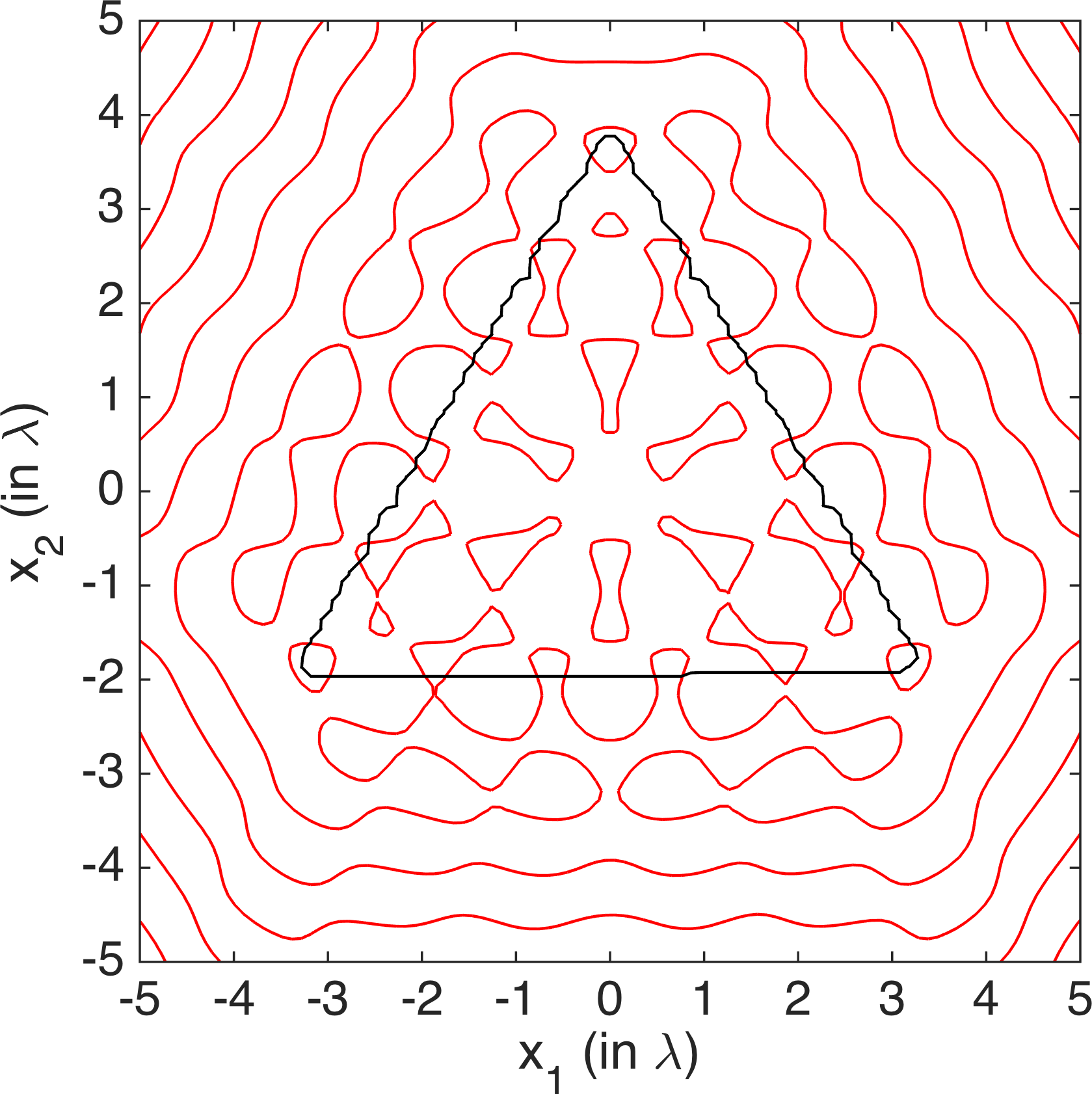}
&
	\includegraphics[width = 0.30\linewidth]{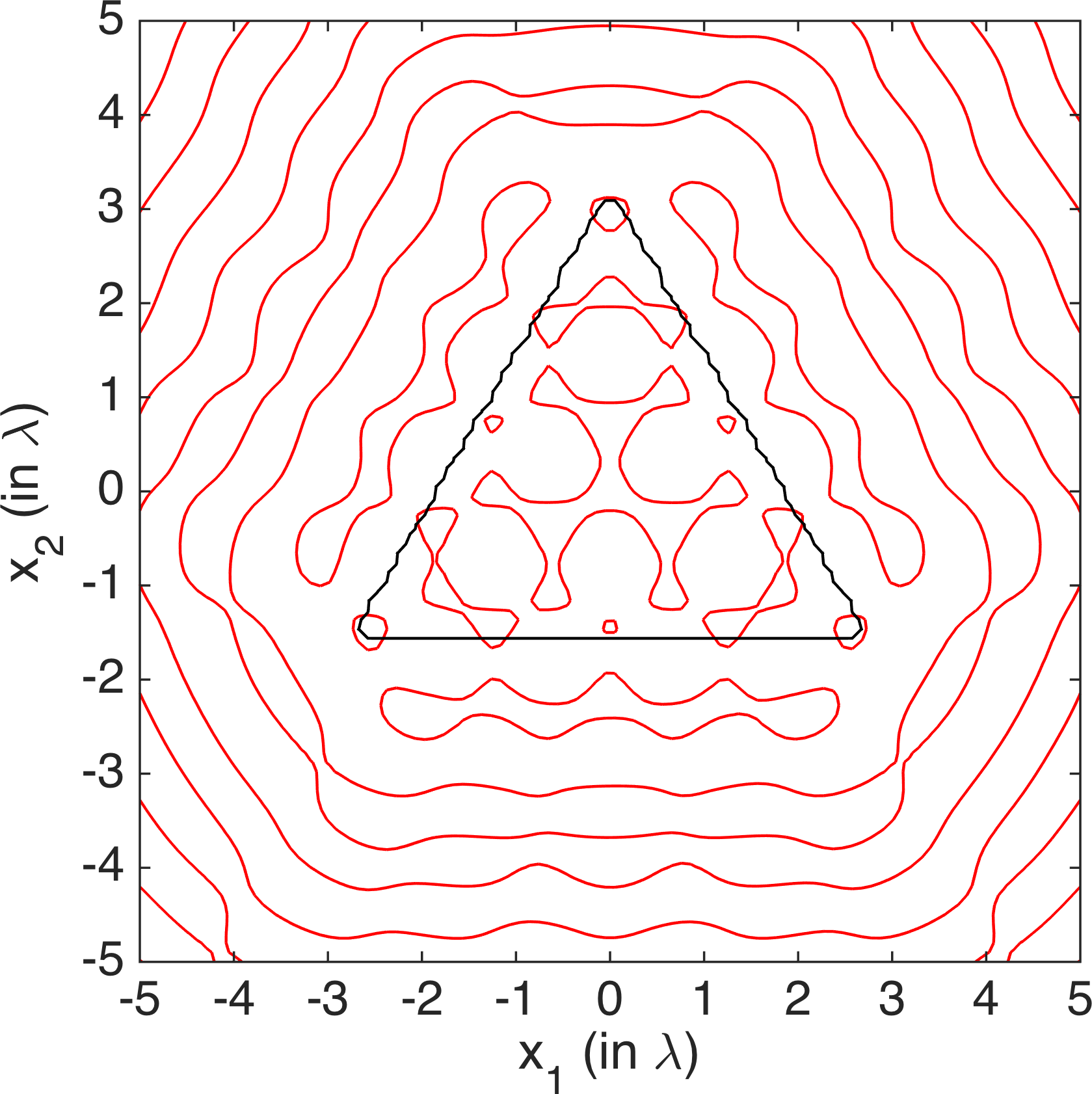}
\\
$x_3=-\lambda$
&
$x_3=0$
&
$x_3=\lambda$
\end{tabular}
\caption{Approximation of a function $f$ with tetrahedral zero level
 set. The top row shows three $x_3$ constant slices 
of the real part of the approximation of $f$. The bottom row shows zero level
sets of the original function $f$ (black) and the real part of the approximation 
(red) for each slice. The color scale in the top row ranges from -0.01 (blue) to 
0.01 (red).}
\label{fig:tetrahedron}
\end{figure}

\subsection{Approximation on a plane}
\label{sec:numerics:area}
We compare two methods for approximating a compactly supported
bounded function $f$  by Helmholtz equation solutions: time reversal,
and projection onto the space of Herglotz wave functions supported on
the upper half of the sphere $S(0,k)$ and restricted to the plane $x_3=0$.
As we saw in \cref{thm:herglotz}, the projection method is optimal
and should give better approximations. This is confirmed in the
numerical results we present here.

\subsubsection{Time reversal approximation}
As explained in \cref{sec:timerev}, we can view time reversal as a
convolution with kernel $(4\pi)^{-1}j_0(k| \cdot |)$, which is the
tightest an entire solution to the Helmholtz equation can get. The function $f$
that we use is obtained from the
University of Utah ``U" logo by rescaling it so that it is about
$6\lambda \times 6\lambda$, and setting $f(x) = 1$ inside the ``U'' and
$f(x)=-1$ outside.  We use a uniform grid on the square
$[-5\lambda,5\lambda]^2$ with $512^2$ points.  

The time reversal
approximation is shown in \cref{fig:convolution}. 
\Cref{thm:2dtr} shows that the time reversal procedure is equivalent, up
a to a scaling factor, to filtering the function we wish to approximate
with a filter response function that is supported in $B(0,k)$ but that
boosts the frequencies with $|\vxi|$ close to $k$.  We report only results
up to a proportionality constant, and evaluate the performance of this
method by comparing zero level sets.  This procedure gives a Herglotz
wave function approximation to $f$ that is suboptimal (see
\cref{sec:tr}) compared to the projection method we see next.
\begin{figure}
\begin{center}
\begin{tabular}{cc}
	\includegraphics[width = 0.45\linewidth]{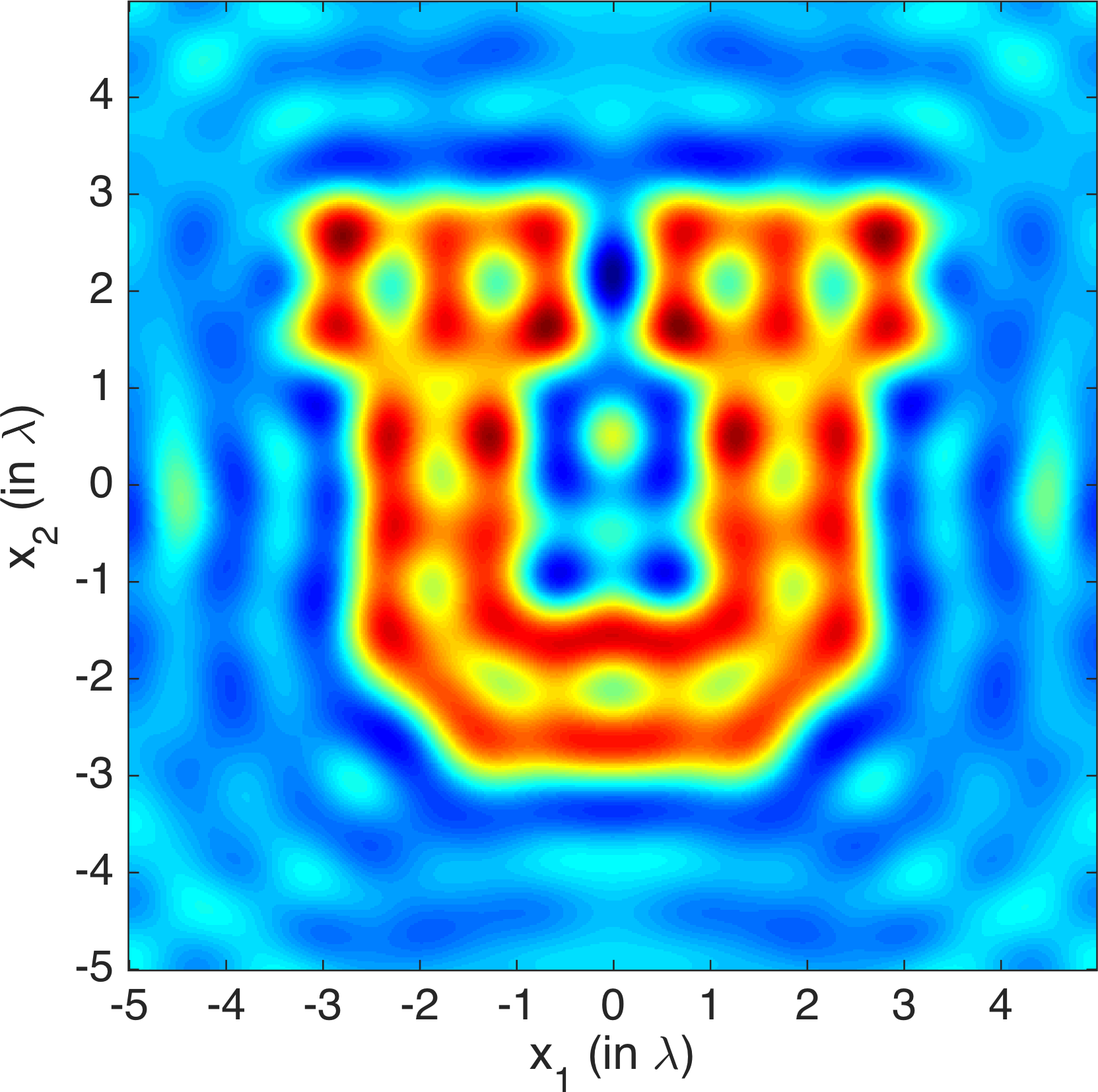}
&
	\includegraphics[width = 0.45\linewidth]{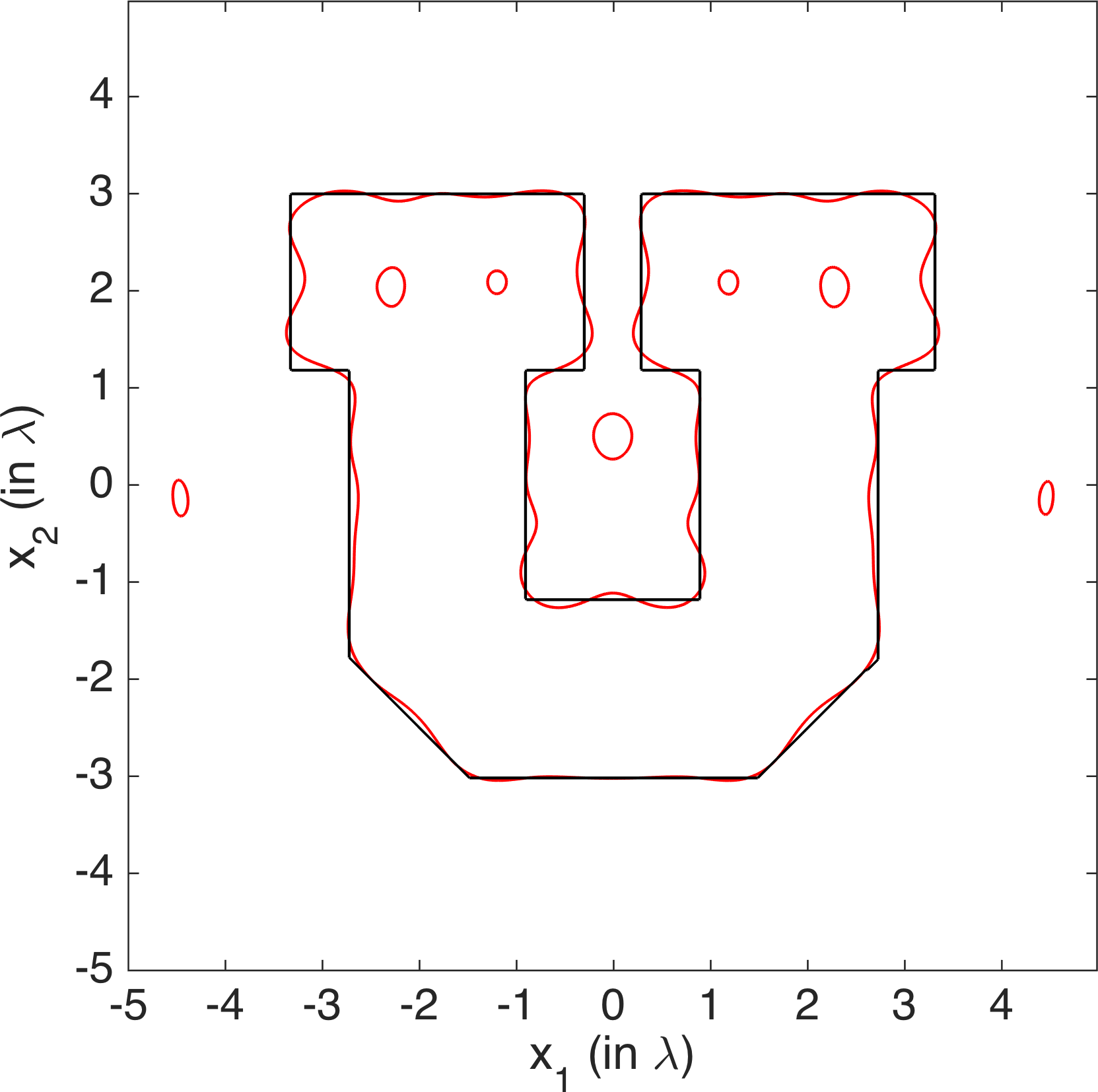}
\\
(a)
&
(b)
\end{tabular}
\end{center}
\caption{Approximating a function $f$ with ``U'' zero level set using
 time reversal. (a) shows the convolution of $f$ with $j_0(k|\cdot|)$.
 (b) shows zero level sets of $f$ (black) and of the convolution (red).}
\label{fig:convolution}
\end{figure}

\subsubsection{Best approximation with Herglotz wave functions}

Following \cref{sec:herglotz:plane}, the best approximation by Herglotz
wave functions of some function $f$ can be obtained by first
filtering the function $f$, eliminating all spatial frequencies $\vxi$
with $|\vxi| > k$ and then using \cref{lem:herglotz} to relate the
filtered $f$ to a Herglotz density on $S(0,k)$.

To filter the function $f$ (i.e. projecting it onto $B_k$) we proceed as
follows.
\begin{enumerate}[Step 1.]
 \item Use a uniform spatial grid in a square to discretize $f$. By using the DFT,
  this gives an approximation to $\hat f$ on a uniform (spatial)
  frequency grid. 
\item Restrict the $\hat f$ computed in Step 1 to the ball $B(0,k)$.
\item Calculate the inverse Fourier transform of $\hat f|_{B(0,k)}$
 (using the DFT).
\end{enumerate}
The effect of filtering is shown in \cref{fig:projection}, for the same
function $f$ with University of Utah logo. As in the time reversal
experiments we discretized $f$ on a uniform grid of the square
$[-5\lambda, 5\lambda]^2$ with $512^2$ points. This corresponds to a
spatial frequency grid on the square $[-25.6k,25.6k]^2$ with the same
number of points. Since the frequencies $|\vxi| > k$ have been
eliminated, the corners in the zero level set of $f$ are smoothed out in
the filtered version.

\begin{figure}
\begin{center}
\begin{tabular}{cc}
	\includegraphics[width = 0.45\linewidth]{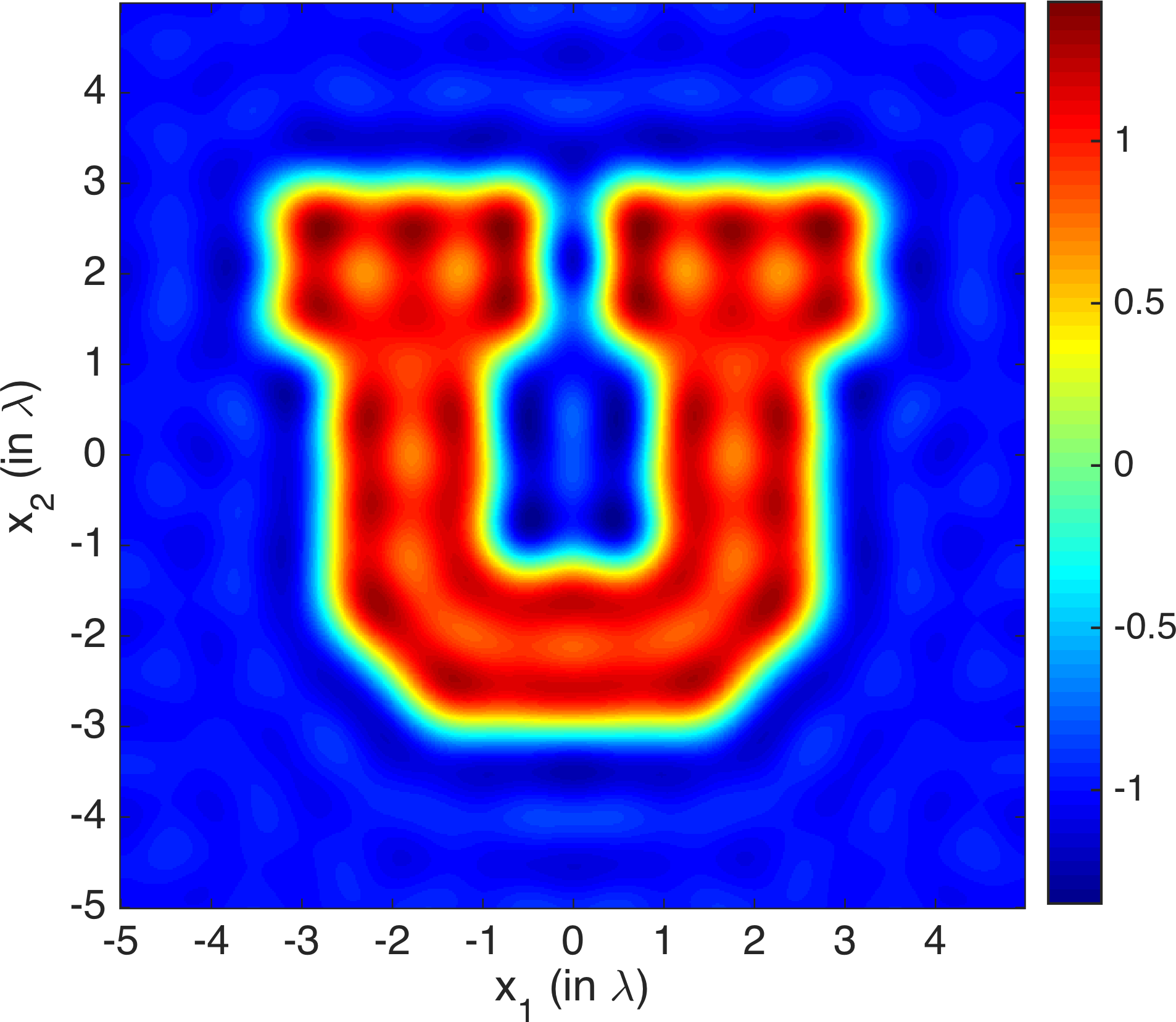}
&
	\includegraphics[width = 0.39\linewidth]{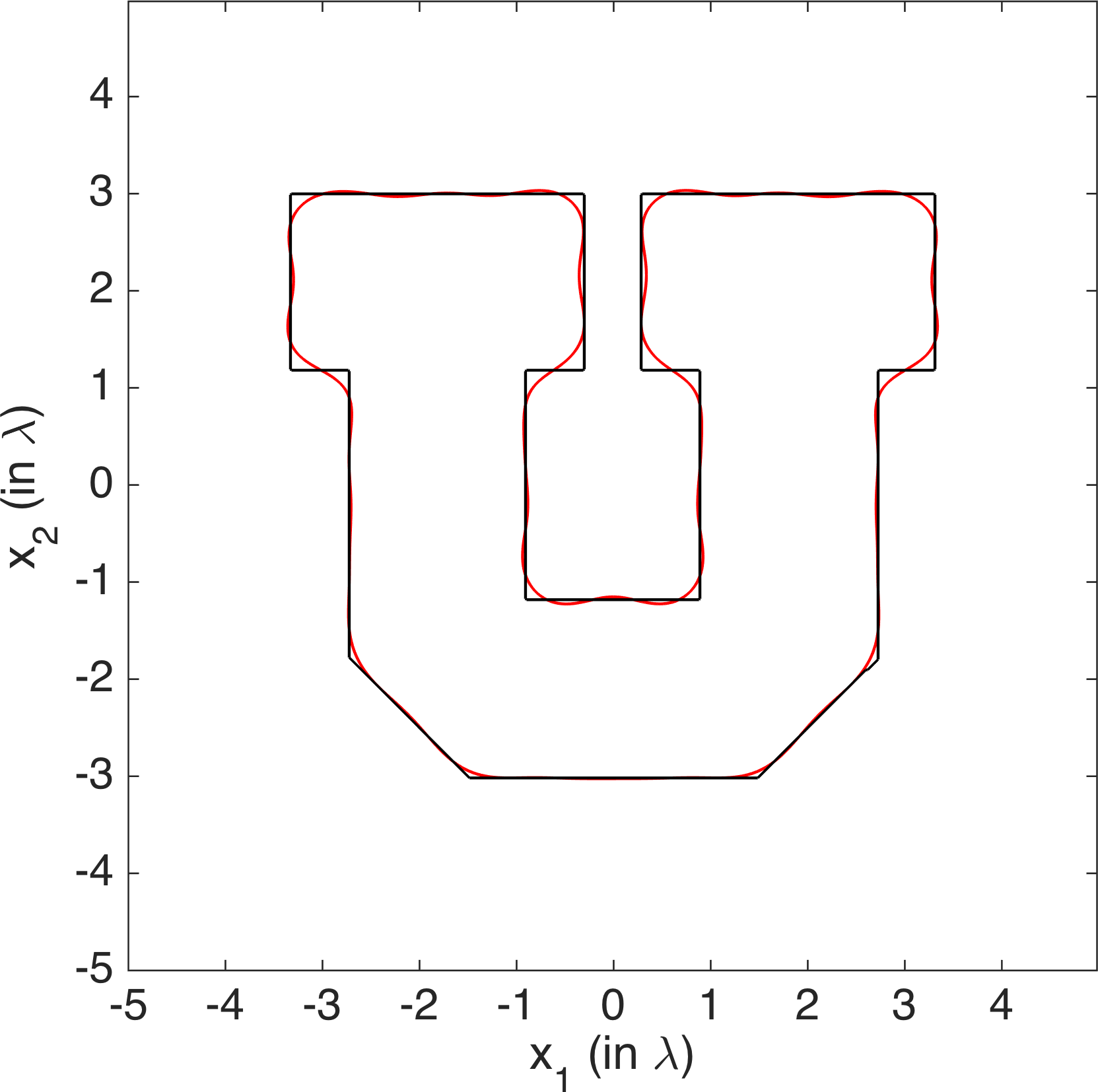}
\\
(a)
&
(b)
\end{tabular}
\end{center}
\caption{Approximating a function with ``U" zero level set using the projection
approach described in \cref{sec:numerics:area}. (a) Real part of
projection of $f$ onto $S_{k+}|_{x_3=0}$. (b) Zero level set of real
part of projection.}
\label{fig:projection}
\end{figure}

We recall that \cref{lem:herglotz} gives a one-to-one
relationship between functions in $B_k$ and functions in 
$S_{k+}|_{x_3=0}$. In other words, the best approximation
of $f$, which we obtained by projecting $f$ onto the space
of band-limited functions $B_k$, corresponds to the restriction
of a Herglotz wave function on the upper half of the sphere 
$S(0,k)$ to the plane $x_3=0$. The Herglotz density we used in the
numerical experiment appears in \cref{fig:mollweide}, where we can clearly see it
is supported on the upper half sphere. We use this relation to
evaluate a Herglotz wave function with density
\cref{eq:density} on the plane $x_3=0$. This is done as follows.
\begin{enumerate}[Step 1.]
\item Discretize the sphere $S(0,k)$ with a Delaunay triangulation.
\item Use a uniform spatial grid to discretize
$f$. With the DFT these give an approximation of $\hat f$ on a uniform
  (spatial) frequency grid.
\item Interpolate $\hat f$ on a uniform grid in the $x_3=0$ plane.
\item The Herglotz wave function $u$ with density
\cref{eq:density} is given by the integral \cref{eq:hwf} with
$g$ given by \cref{eq:density}, that we approximate by assuming
it is piecewise constant on the triangles of the triangulation 
of $S(0,k)$. 
\end{enumerate}

For the particular numerical experiment we present, the triangulation of
$S(0,k)$ consisted of 7292 triangles and was obtained using the DistMesh
package \cite{Persson:2004:SMG}. The function $f$ was sampled on a
uniform grid with $512^2$ points on the square $[-5\lambda,
5\lambda]^2$, which corresponds to a uniform grid in spatial frequency
in the square $[-25.6k, 25.6k]^2$ with an identical number of points.
As illustrated in \cref{fig:herglotz}, the Herglotz wave function
restricted to the $x_3=0$ plane is very close to the projection of $f$
onto $B_k$. Any differences are due to discretization errors, as they
should be identical by \cref{lem:herglotz}. To emphasize that the
approximation is a wave field, we also display in \cref{fig:herglotz} the field
at the planes $x_3 = \pm\lambda$. As expected, the approximation
degrades as we move away from the plane $x_3=0$. The $x_3=\pm \lambda$
slices
look identical because the Fourier transform of the function we
considered has a relatively small imaginary part. Nevertheless the
relative difference between the fields at $x_3=\lambda$ and
$x_3=-\lambda$ is about 65\%, when both real and imaginary parts are
kept.

\begin{figure}
\begin{center}
\begin{tabular}{cc}
	\includegraphics[width = 0.45\linewidth]{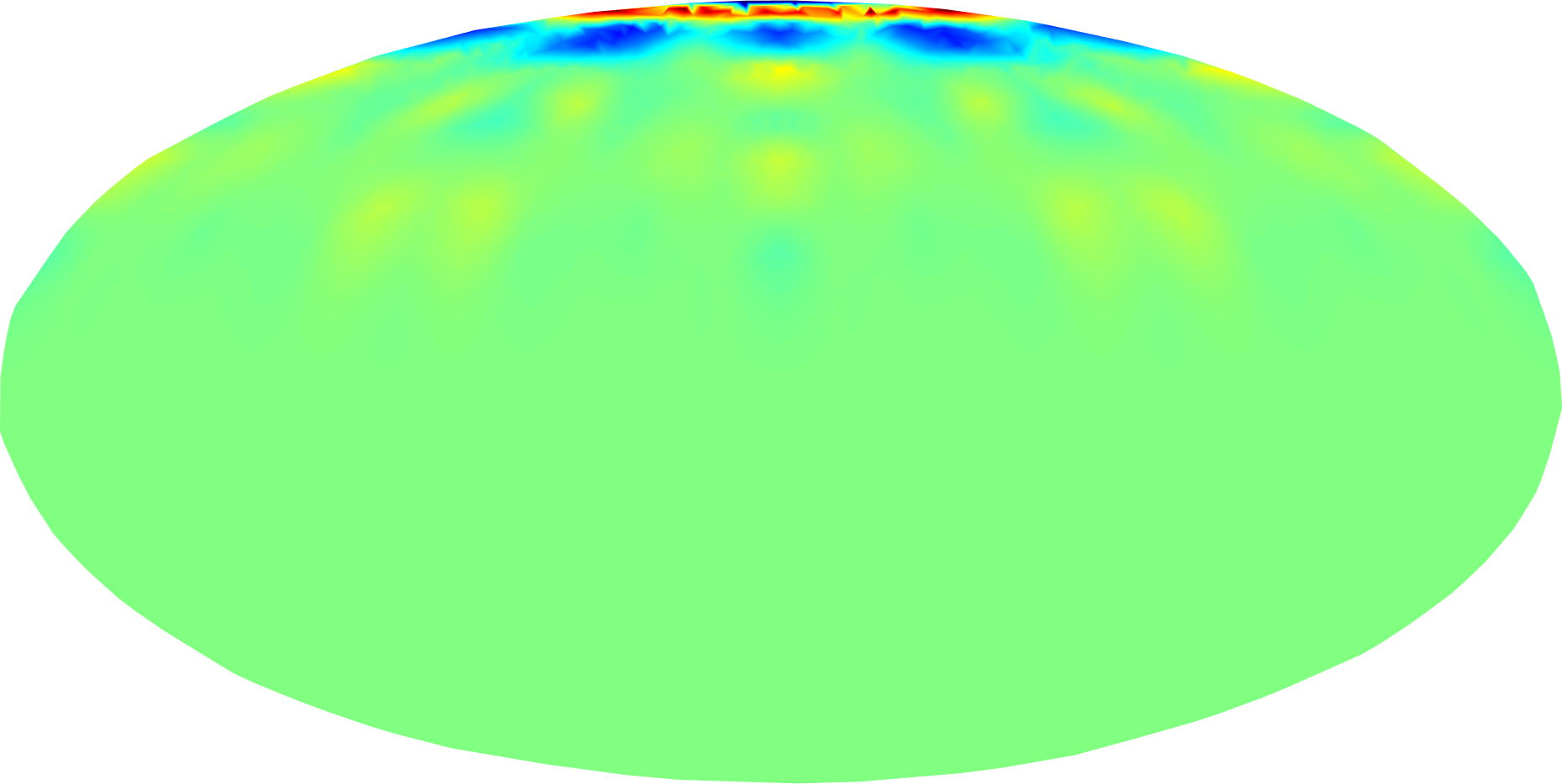}
&
	\includegraphics[width = 0.45\linewidth]{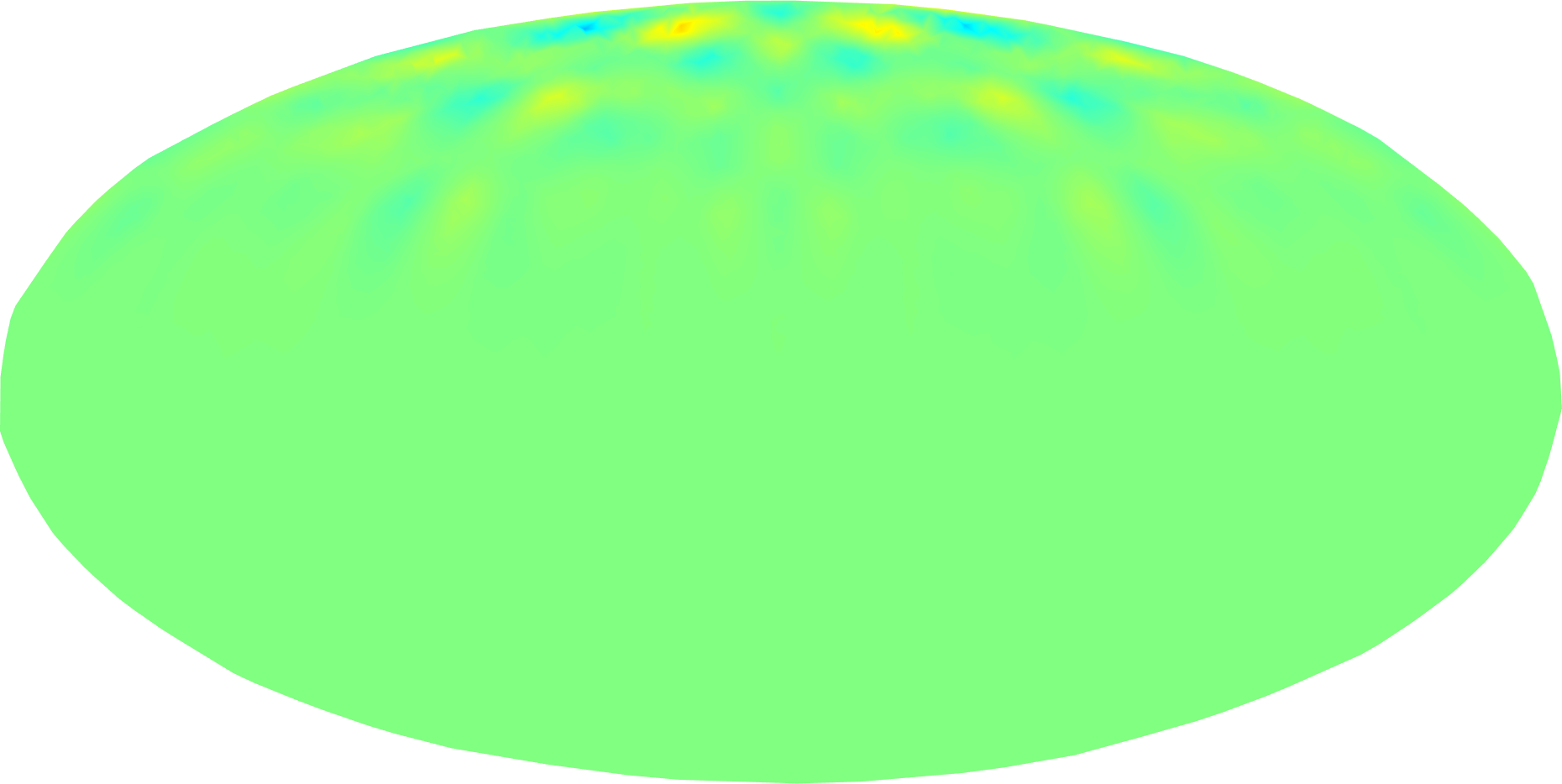}
\end{tabular}
\end{center}
\caption{Real (left) and imaginary (right) parts of the Mollweide projection
for the Herglotz wave function density used to obtain the approximation in
 \cref{fig:herglotz}. The color scale ranges from -20 (blue) to 20
 (red).}
\label{fig:mollweide}
\end{figure}

\begin{figure}
\begin{tabular}{ccc}
	\includegraphics[width = 0.30\linewidth]{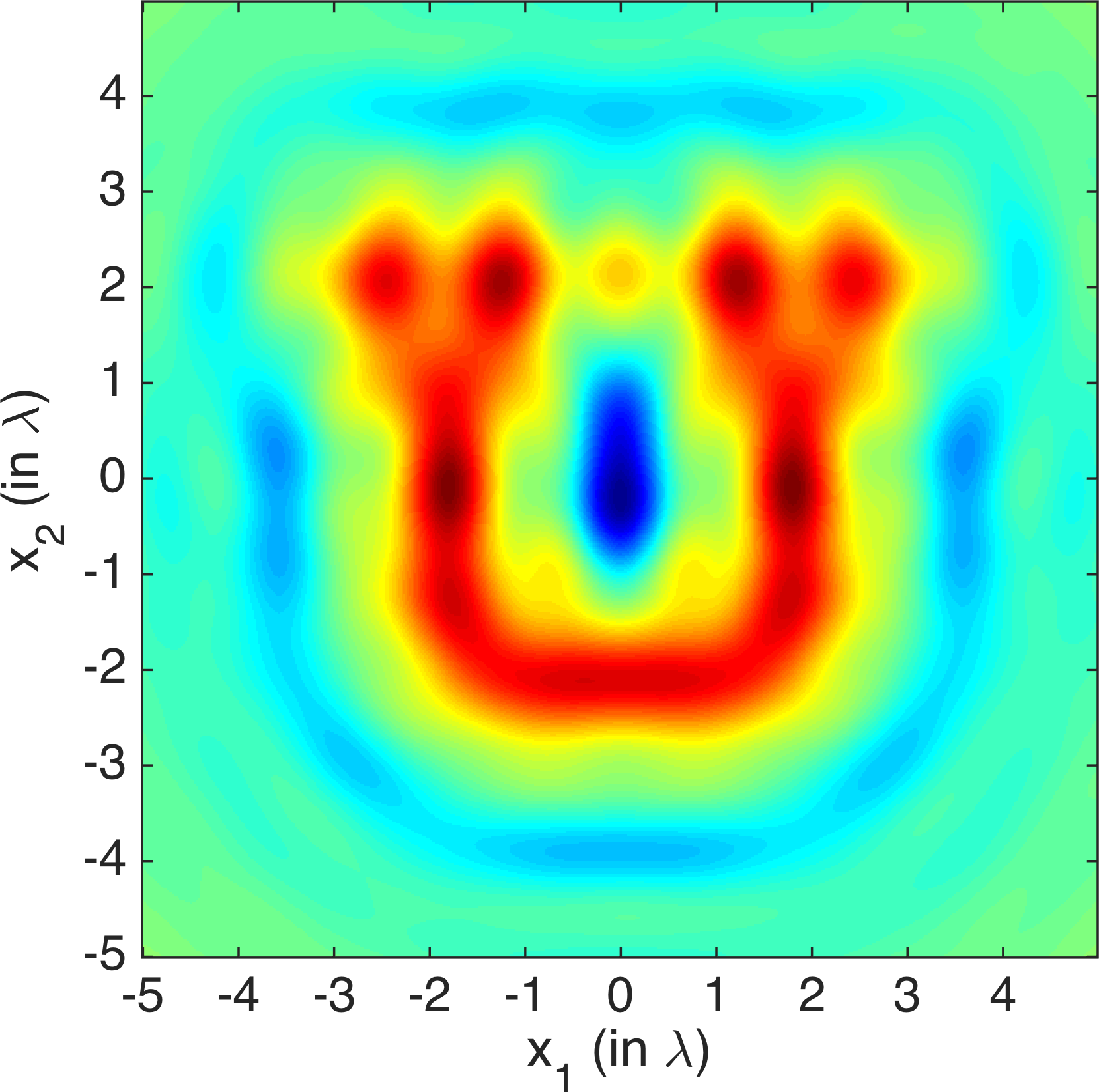}
&
	\includegraphics[width = 0.30\linewidth]{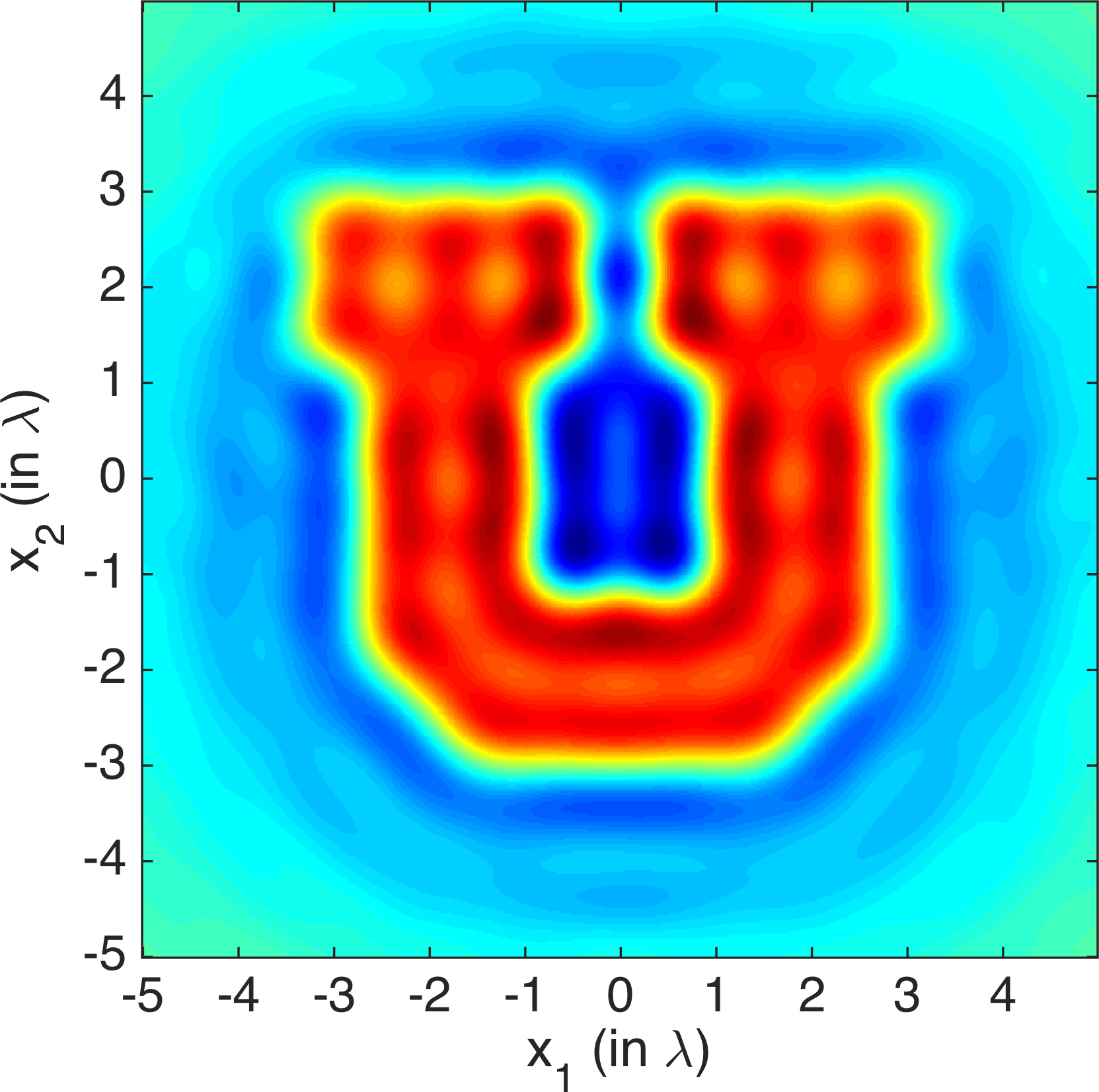}
&
	\includegraphics[width = 0.30\linewidth]{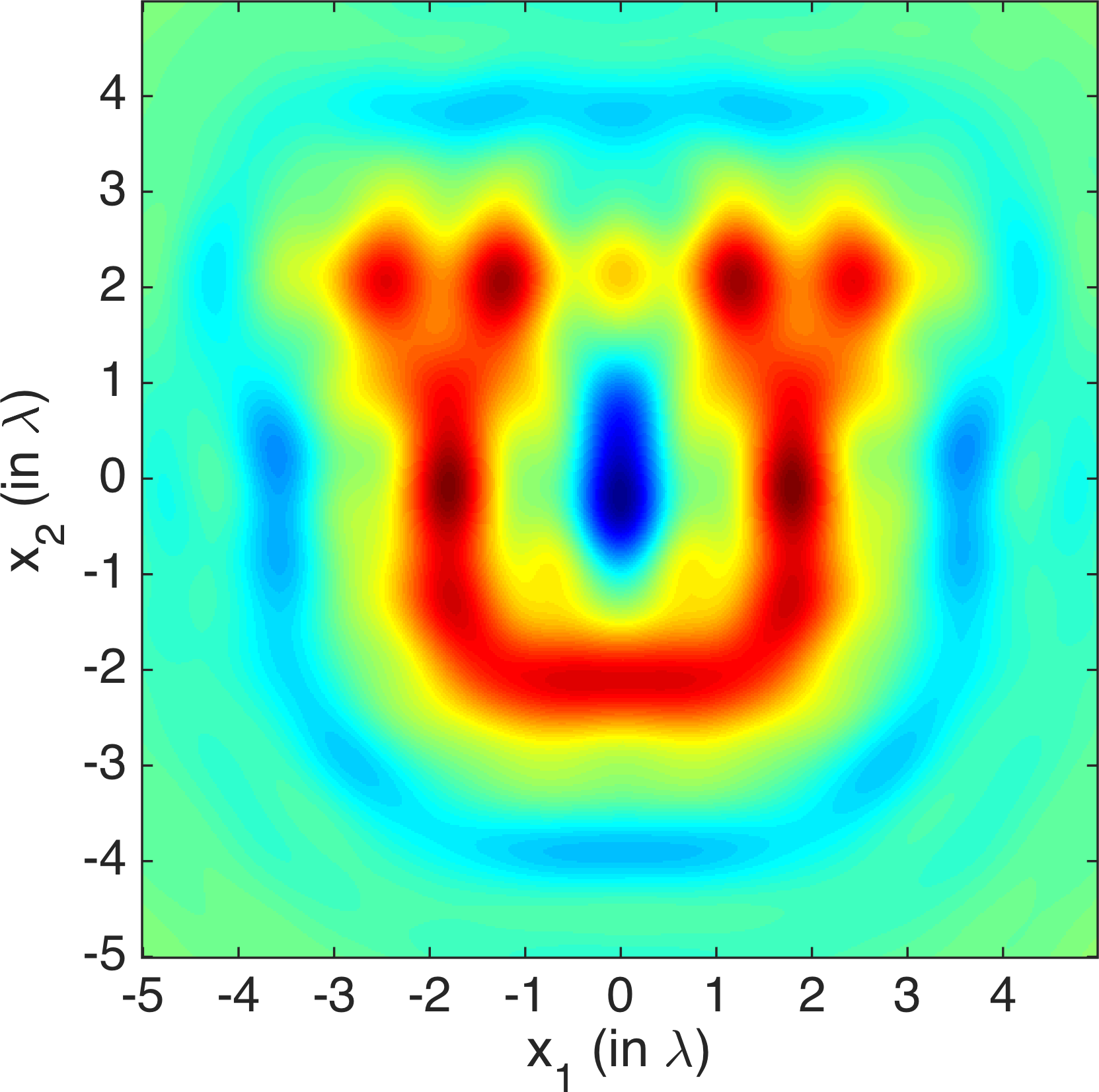}
\\
	\includegraphics[width = 0.30\linewidth]{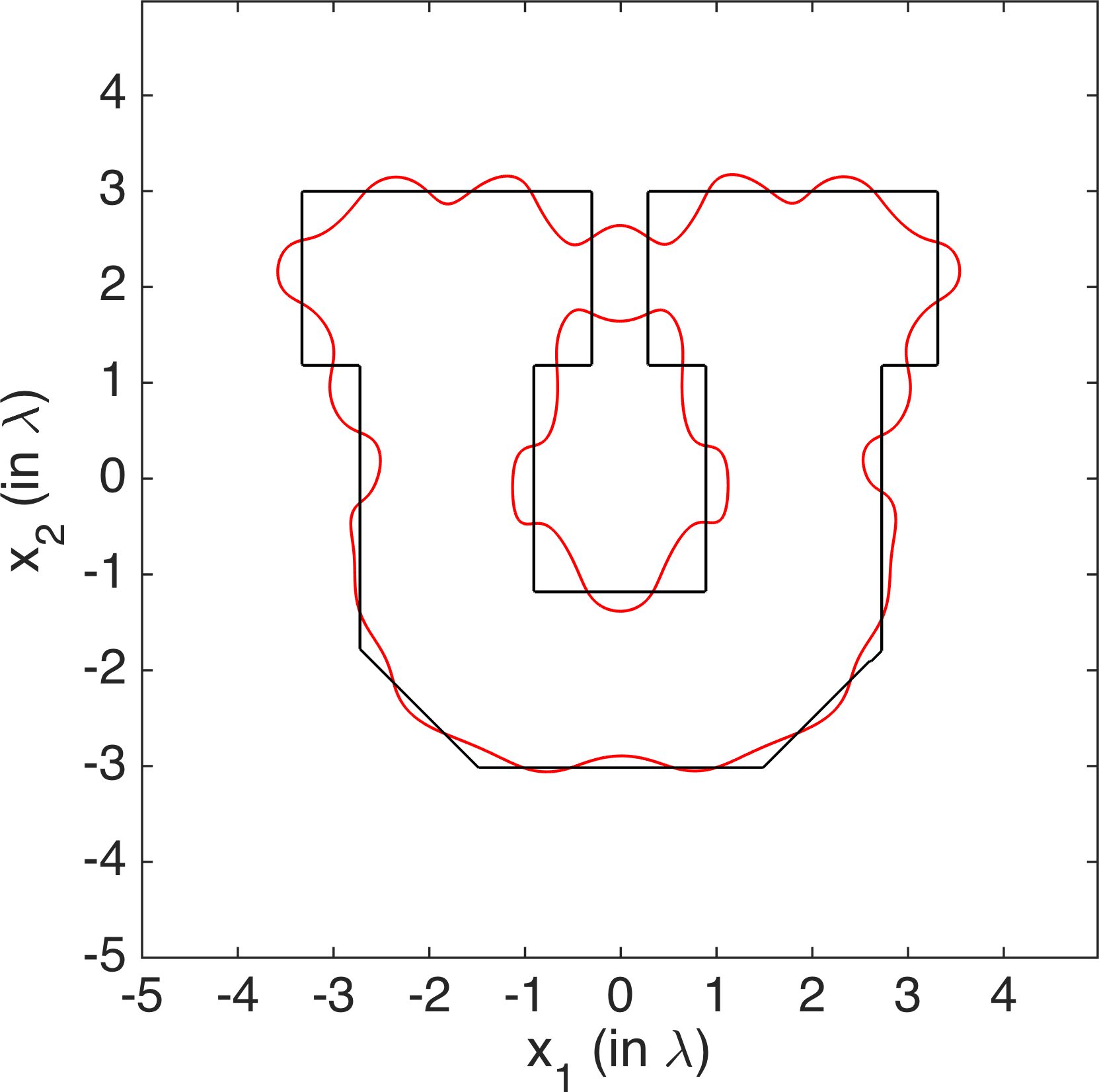}
&
	\includegraphics[width = 0.30\linewidth]{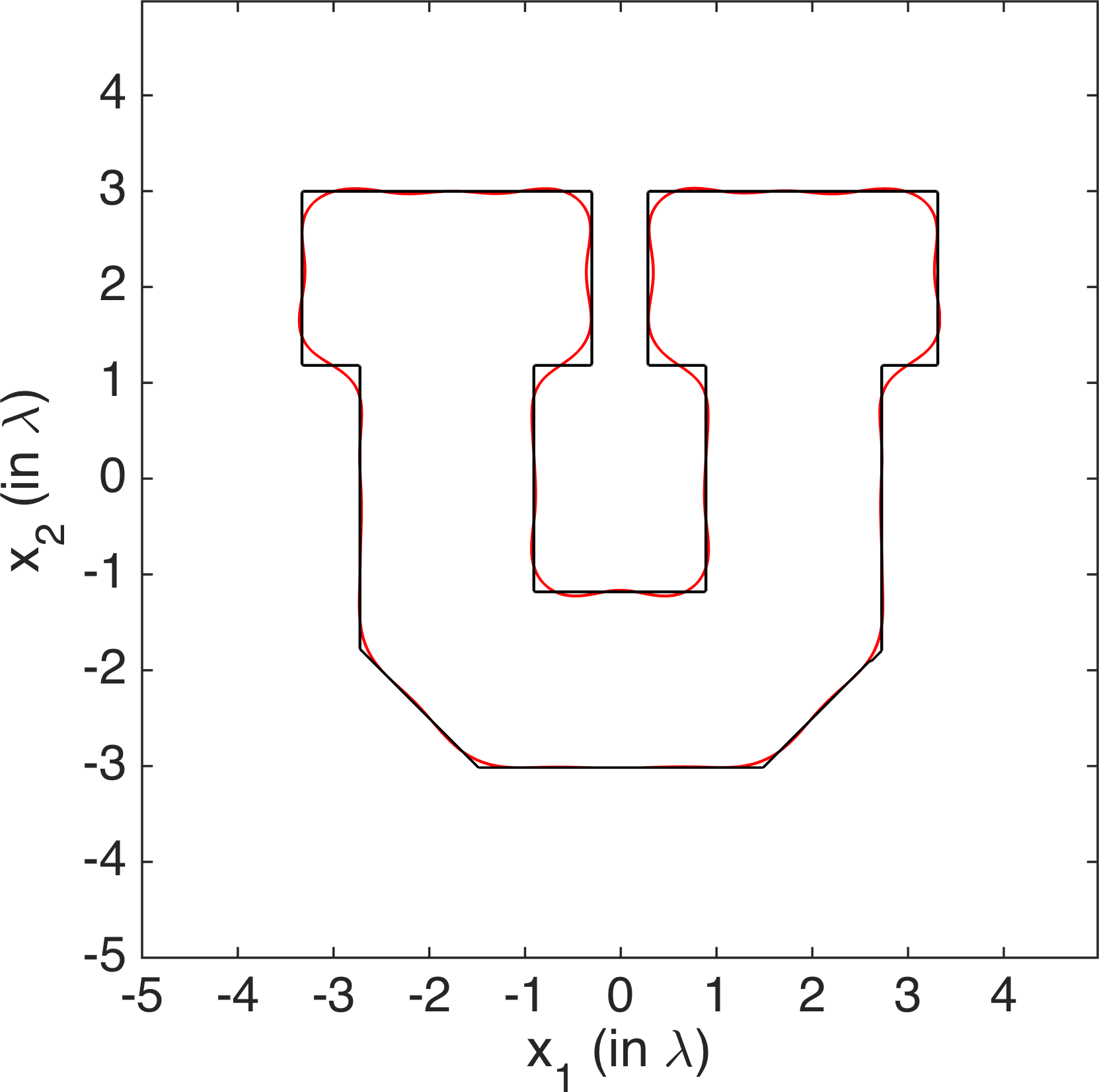}
&
	\includegraphics[width = 0.30\linewidth]{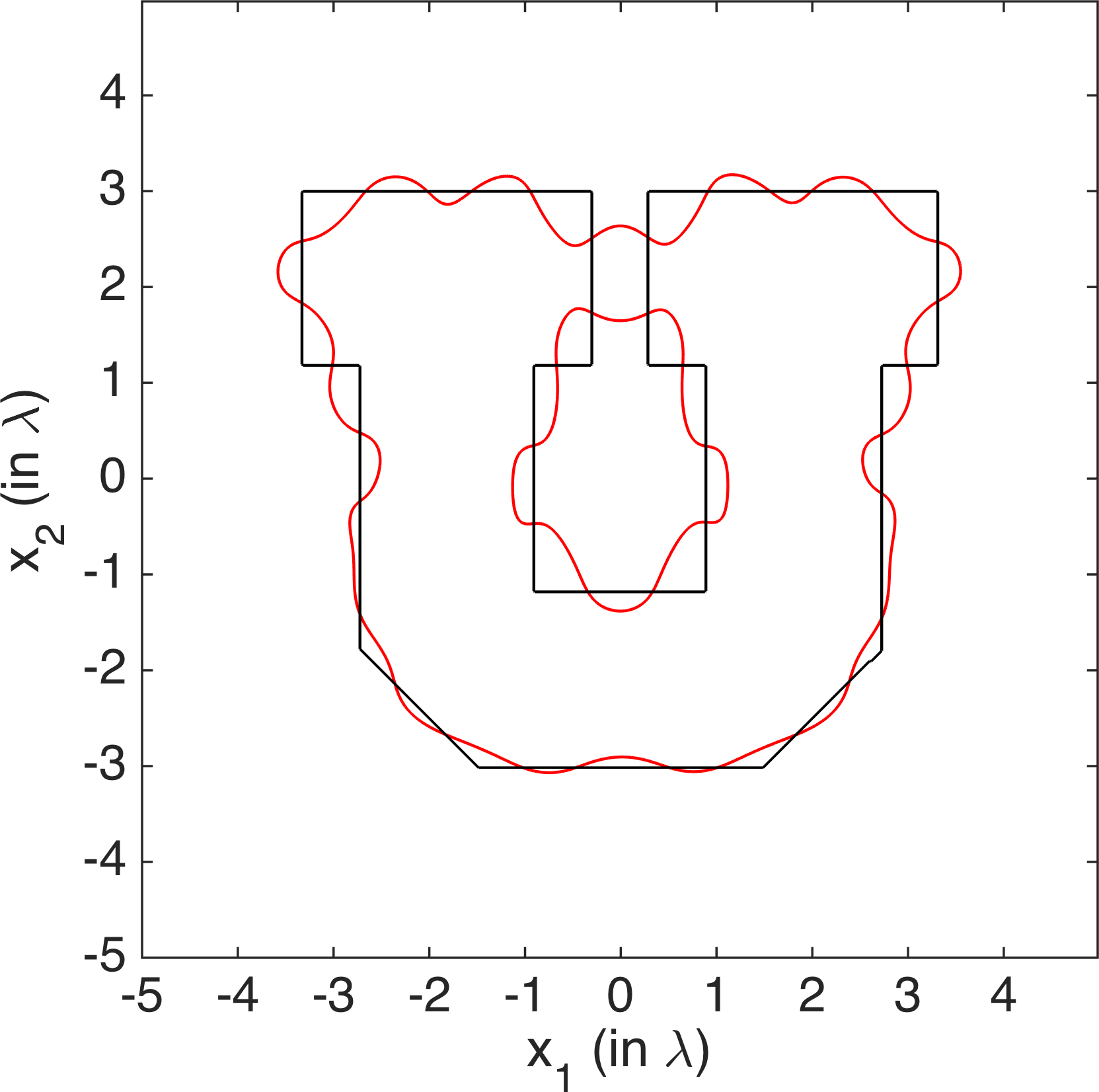}
\\
$x_3=-\lambda$
&
$x_3 = 0$
&
$x_3 = \lambda$
\end{tabular}
\caption{Best approximation of $f$ in $S_{k+}|_{x_3=0}$. 
The top row shows three slices of the real part of the 
approximation of $f$. The bottom row shows zero level sets of the 
original function $f$ (black) and the real part of the approximation 
(red). The color scale in the top row ranges from -2 (blue) to 1.5 
(red). 
}
\label{fig:herglotz}
\end{figure}

\section{Summary and future work}
\label{sec:future}
We have shown that the problem of approximating a function by 3D
Herglotz wave functions is not well-posed if we measure the misfit in a
ball, but becomes well-posed if the misfit is measured in a plane. The
solution to the approximation problem on a ball is asymptotically close
to a time reversal experiment where the function to be approximated is
regarded as a source density. The approximation problem in a plane is
shown to be related to filtering the spatial frequencies of the function
we wish to approximate. Our theoretical results are illustrated by numerical
experiments showing that the approximation problem on a plane gives
Helmholtz equation solutions with nodal set close to that of the
function we approximate. We are currently studying other ways of
measuring misfit to be able to design solutions to the Helmholtz
equation with e.g., maxima along a given curve. This could be used to
manipulate particles in a fluid with acoustic waves, in situations where
the particles cluster about the anti-nodes of a wave.

\section*{Acknowledgements}
 FGV would like to thank Bart Raeymaekers and John
Greenhall for insightful conversations on this topic.

\bibliographystyle{abbrv}
\bibliography{herglotz_bib}

\end{document}